\theoremstyle{plain}
\newtheorem*{lemma*}{Lemma}
\newtheorem{lemma}{Lemma}
\newtheorem*{theorem*}{Theorem}
\newtheorem{theorem}{Theorem}
\newtheorem*{proposition*}{Proposition}
\newtheorem{proposition}{Proposition}
\newtheorem*{corollary*}{Corollary}
\newtheorem{corollary}{Corollary}
\newtheorem*{claim*}{Claim}
\newtheorem*{conjecture*}{Conjecture}
\newtheorem*{question*}{Question}
\theoremstyle{definition}
\newtheorem*{definition*}{Definition}
\newtheorem*{example*}{Example}
\newtheorem*{algorithm*}{Algorithm}
\newtheorem*{remark*}{Remark}
\newtheorem*{remarks*}{Remarks}
\newtheorem{remark}{Remark}
\newtheorem*{convention*}{Convention}
\numberwithin{equation}{section}
\def\al{\alpha}
\def\rh{\rho}
\def\si{\sigma}
\def\ta{\tau}
\def\vh{\varphi}
\def\om{\omega}
\def\La{\Lambda}
\def\C{\mathbb{C}}
\def\N{\mathbb{N}}
\def\R{\mathbb{R}}
\def\cE{\mathcal{E}}
\def\cH{\mathcal{H}}
\def\cO{\mathcal{O}}
\def\fM{\mathfrak{M}}
\def\fN{\mathfrak{N}}
\def\fW{\mathfrak{W}}
\def\p{\partial}
\def\<{\langle}
\def\>{\rangle}
\let\on=\operatorname
\newcommand{\sr}[1]%
{\ifmmode{}^\dagger\else${}^\dagger$\fi\ifvmode
\vbox to 0pt{\vss
 \hbox to 0pt{\hskip\hsize\hskip1em
 \vbox{\hsize3cm\raggedright\pretolerance10000
 \noindent #1\hfill}\hss}\vss}\else
 \vadjust{\vbox to0pt{\vss%
 \hbox to 0pt{\hskip\hsize\hskip1em%
 \vbox{\hsize3cm\raggedright\pretolerance10000%
 \noindent #1\hfill}\hss}\vss}}\fi%
}
\def\A{\;\forall}
\def\E{\;\exists}
\providecommand{\mapsfrom}{\kern.2em%
\setbox0=\hbox{$\leftarrow$\kern-.10em\rule[0.26mm]{0.1mm}{1.3mm}}\box0%
\kern.3em}
\title[On the Borel mapping in the quasianalytic setting]
{On the Borel mapping\\ in the quasianalytic setting}
\author[A.~Rainer]{Armin Rainer}
\address{A.~Rainer: 
Fakult\"at f\"ur Mathematik, Universit\"at Wien, 
Oskar-Morgenstern-Platz~1, A-1090 Wien, Austria}
\email{armin.rainer@univie.ac.at}
\author[G.~Schindl]{Gerhard Schindl}
\address{G.~Schindl: Fakult\"at f\"ur Mathematik, Universit\"at Wien, 
Oskar-Morgenstern-Platz~1, A-1090 Wien, Austria}
\email{gerhard.schindl@univie.ac.at}
\begin{document}

\begin{abstract}
  The Borel mapping takes germs at $0$ of smooth functions to the sequence of iterated partial derivatives at $0$.
  We prove that the Borel mapping restricted to the germs of any quasianalytic ultradifferentiable 
  class strictly larger than the real analytic class is never onto the corresponding 
  sequence space. 
\end{abstract}

\thanks{The authors were supported by FWF-Project P~26735-N25}
\keywords{Quasianalytic classes, ultradifferentiable functions, Borel mapping}
\subjclass[2010]{26E10, 30D60, 46E10}
\date{October 12, 2015}

\maketitle

\section{Introduction}

It is a classical result due to Carleman \cite{Carleman23}, \cite{Carleman26} that the Borel mapping that takes germs at $0$ of 
functions in a quasianalytic Denjoy--Carleman class $\cE^{\{M\}}$ to the sequence of iterated partial derivatives at $0$ 
is never \emph{onto} the corresponding sequence space $\La^{\{M\}}$ unless $\cE^{\{M\}}$ is contained in the 
real analytic class. 
Here $M=(M_k)$ is a weight sequence that dominates the growth of the iterated partial derivatives of the functions 
in $\cE^{\{M\}}$, and \emph{quasianalytic} means that the Borel mapping is injective on $\cE^{\{M\}}$ 
(precise definitions will be given below). Carleman's proof is based on his formula for reconstructing the function $f \in \cE^{\{M\}}$ from the 
sequence of its iterated derivatives at $0$ (due to quasianalyticity $f$ is unique); see also \cite{Thilliez08} for 
a modern account of the proof.   

In the recent paper \cite{BonetMeise13} Bonet and Meise prove this result (non-surjectivity of the Borel mapping) for proper 
quasianalytic classes $\cE^{\{\om\}}$ (and $\cE^{(\om)}$); the brackets $\{~\}$ and $(~)$ refer to classes of Roumieu and 
Beurling type, respectively (see definitions below).  
These classes were introduced 
by Beurling \cite{Beurling61} and Bj\"orck \cite{Bjoerck66}, 
by imposing decay conditions at infinity for the Fourier transform in terms of a weight function 
$\om$, and they were equivalently described by Braun, Meise, and Taylor \cite{BMT90}. We shall refer to these classes as 
Braun--Meise--Taylor classes. In \cite{BonetMeise13} the problem is transferred to weighted spaces of entire functions via the
Fourier--Laplace transform and functional-analytic methods are applied.

Sometimes $\cE^{\{\om\}} = \cE^{\{M\}}$ (and $\cE^{(\om)} = \cE^{(M)}$) for a suitable 
sequence $M$, but in general the families of classes described by weight functions $\om$ and those described 
by weight sequences $M$ are mutually distinct; see \cite{BMM07}. 
However, the method developed in \cite{RainerSchindl12} allows us to describe the classes $\cE^{\{\om\}}$ (and $\cE^{(\om)}$) 
as unions (or intersections) of associated one parameter families of Denjoy--Carleman classes $\cE^{\{W^x\}}$ (or $\cE^{(W^x)}$), 
where $W^x$ are weight sequences associated with $\om$ in a precise way depending on a real parameter $x$. 
More generally, this construction can be turned into a definition, and in this manner one obtains ultradifferentiable 
classes $\cE^{\{\fM\}}$ (and $\cE^{(\fM)}$) defined in terms of \emph{weight matrices} $\fM = \{M^x\}_x$.  
These comprise the classical Denjoy--Carleman classes $\cE^{\{M\}}$, $\cE^{(M)}$, the Braun--Meise--Taylor classes 
$\cE^{\{\om\}}$, $\cE^{(\om)}$, and many more; cf.\ \cite[Theorem 5.22]{RainerSchindl12}. This new technique often makes it possible to treat all these classes uniformly, 
while previously every setting required a special proof.         

In the present paper we will show in an elementary way that the Borel mappings 
\begin{align*}
  j^\infty : \cE^{\{\fM\}}_{0,n} \to \La^{\{\fM\}}_n \quad \text{and} \quad 
  j^\infty : \cE^{(\fM)}_{0,n} \to \La^{(\fM)}_n, \quad j^\infty f = (\p^\al f(0))_\al,
\end{align*}
are never surjective in the proper quasianalytic setting (\emph{proper} means not contained in the real analytic class).
Here $\cE^{\{\fM\}}_{0,n}$ (resp.\ $\cE^{(\fM)}_{0,n}$) denotes the ring of germs at $0 \in \R^n$ of functions 
in $\cE^{\{\fM\}}$ (resp.\ $\cE^{(\fM)}$), and $\La^{\{\fM\}}_n$ (resp.\ $\La^{(\fM)}_n$) is the corresponding sequence space. 
As a corollary we recover the results of Bonet and Meise \cite{BonetMeise13}. 

We actually show more:
if $\cE^{\{\fM\}}$ (resp.\ $\cE^{(\fM)}$) is a proper quasianalytic class 
then 
there exist elements in $\La^{\{\fM\}}_n$ (resp.\ $\La^{(\fM)}_n$) that are not contained in 
\begin{equation} \label{image}
  j^\infty \Big(\bigcup \Big\{\cE^{\{\fN\}}_{0,n} : \cE^{\{\fN\}}_{0,n} \text{ is quasianalytic} \Big\}\Big);
\end{equation} 
see Theorems \ref{matrixR} and \ref{matrixB}. 
Note that, since trivially $\cE^{(\fN)}_{0,n} \subseteq \cE^{\{\fN\}}_{0,n}$, 
this result implies all statements above. In particular, $\La^{\{\om\}}_n$ (resp.\ $\La^{(\om)}_n$) is not contained in
\[
  j^\infty \Big(\bigcup \Big\{\cE^{\{\si\}}_{0,n} : \si \text{ is a quasianalytic weight function} \Big\}\Big).   
\] 
Our proof is based on Bernstein's theorem on absolutely monotone 
functions \cite{Bernstein14} and on a theorem due to Bang \cite{Bang53} (Theorem~\ref{thmBang} below), 
which we recall with full proof for the sake of 
completeness. 

Let us emphasize that our proof also provides some partial information on the image \eqref{image}. If $n=1$ (for simplicity) 
then \eqref{image} cannot contain any strictly positive sequence $a = (a_k)$ unless $a$ defines a real analytic germ. 
Even for a single quasianalytic weight sequence $M$ it is generally 
not known how to identify the elements of $j^\infty \cE^{\{M\}}_{0,1}$ among those of $\La^{\{M\}}_1$.     

We wish to mention the recent paper by Sfouli \cite{Sfouli14} in which Carleman's result is obtained for quasianalytic local rings 
defined in an abstract way. This abstract definition includes stability under composition and differentiation. These rather restrictive 
properties (see e.g.\ \cite{RainerSchindl14} for a characterization of the former) are not required in our setting. 
Moreover, the approach of Sfouli yields Carleman's result only in dimension $n\ge 2$.

\section{Weight sequences and Denjoy--Carleman classes} \label{secDC}

Let us recall some basic facts on weight sequences and define Denjoy--Carleman classes and its germs.

\subsection{Denjoy--Carleman classes and its germs} \label{DenjoyCarleman}

Let $M=(M_k)_{k \in \N}$ be a positive sequence and let $U \subseteq \R^n$ be an open non-empty set. 
Then the set $\cE^{\{M\}}(U)$ of all $f \in C^\infty (U)$ such that for all compact $K \subseteq U$ there exists 
$\rh>0$ with   
\[
  \|f\|^M_{K,\rh} := \sup_{x \in K,~\al \in \N^n} \frac{|\p^\al f(x)|}{\rh^{|\al|} M_{|\al|}} < \infty,
\]
is called the \emph{Denjoy--Carleman class of Roumieu type} associated with $M$. 
It is endowed with the natural projective topology over $K$ and 
the inductive topology over $\rh \in \N$. Analogously, we define the \emph{Denjoy--Carleman class of Beurling type} $\cE^{(M)}(U)$
consisting of all $f \in C^\infty (U)$ such that for all compact $K \subseteq U$ and all 
$\rh>0$, $\|f\|^M_{K,\rh}<\infty$, and endow it with its natural Fr\'echet topology ($1/\rh \in \N$).

Let us define the spaces of germs at $0 \in \R^n$, 
\begin{align*}
   \cE^{\{M\}}_{0,n} &:= \on{ind}_{k \in \N} \cE^{\{M\}}((-\tfrac{1}{k},\tfrac{1}{k})^n) \\ 
   \cE^{(M)}_{0,n} &:= \on{ind}_{k \in \N} \cE^{(M)}((-\tfrac{1}{k},\tfrac{1}{k})^n). 
\end{align*} 
Finally we consider the sequence spaces 
\begin{align*}
   \La^{\{M\}}_{n} &:= \{a=(a_\al) \in \C^{\N^n}  : \E \rh >0 : |a|^M_\rh < \infty  \}, \\ 
   \La^{(M)}_{n} &:= \{a=(a_\al) \in \C^{\N^n}  : \A \rh >0 : |a|^M_\rh < \infty  \}, 
\end{align*}
where
\[
  |a|^M_{\rh} := \sup_{\al \in \N^n} \frac{|a_\al|}{\rh^{|\al|} M_{|\al|}}.
\]
Then $\La^{\{M\}}_{n}$ is an (LB)-space and $\La^{(M)}_{n}$ is a Fr\'echet space.

With the sequence $M_k = k!$ we recover the real analytic functions $\cE^{\{k!\}}(U) = C^\om(U)$ and restrictions of the entire 
functions $\cE^{(k!)}(U) = \cH(\C^n)$ if $U$ is connected. 
We denote by $\cO_{0,n}$ the ring of germs of real analytic functions at $0 \in \R^n$. 

By convention we write $\cE^{[M]}$ if we mean either $\cE^{\{M\}}$ or $\cE^{(M)}$, similarly $\La^{[M]}_{n}$ stands for 
$\La^{\{M\}}_{n}$ and $\La^{(M)}_{n}$, etc.

\subsection{Weight sequences and properties of Denjoy--Carleman classes} \label{weights}

We shall impose some mild regularity properties on the sequence $M=(M_k)$ that guarantee, in particular, 
that $\cE^{[M]}_{0,n}$ is a ring.

By definition, a \emph{weight sequence} is a sequence of positive real numbers $M=(M_k)_{k\in \N}$ such that:
\begin{align}
  &1 = M_0 \le M_1, \label{M1}\\
  &k \mapsto M_k \text{ is logarithmically convex (log-convex for short)}, \label{M2}\\
  &\liminf_{k} m_k^{1/k}>0. \label{M3}
\end{align}
Given a sequence $M=(M_k)$ we associate the sequences $m=(m_k)$ and $\mu=(\mu_k)$ given by
\begin{align*}
  m_k := \frac{M_k}{k!}, \quad
  \mu_k := \frac{M_k}{M_{k-1}}. 
\end{align*} 
Note that \eqref{M1} and \eqref{M2} imply that $M_k$ and $M_k^{1/k}$ are non-decreasing.

\begin{remark}  
  Under the assumption that $C^\om \subseteq \cE^{\{M\}}$ (resp.\ $C^\om \subseteq \cE^{(M)}$) which we shall always make,
  \eqref{M1}, \eqref{M2}, and \eqref{M3} are no restriction of generality for our problem, because one can change to the log-convex minorant $\underline M$ of $M$ 
  which describes the same function space: $\cE^{[\underline M]} = \cE^{[M]}$, 
  see \cite[Theorem 2.15]{RainerSchindl12}, whereas $\La^{[\underline M]} \subseteq  \La^{[M]}$.    

  In \cite{RainerSchindl12} and \cite{RainerSchindl14} we denoted by $M=(M_k)$ the sequence which here 
  is denoted by $m=(m_k)$. We deviate from our former convention for notational simplicity.  
\end{remark}

For arbitrary positive sequences $M =(M_k)$ and $N=(N_k)$ we define 
\begin{align*}
   M \preceq N \quad :\Leftrightarrow  \quad \E C,\rh >0 \A k : M_k \le C \rh^k N_k 
   \quad \Leftrightarrow  \quad \sup_k \Big(\frac{M_k}{N_k}\Big)^{1/k} < \infty
 \end{align*} 
and  
\begin{align*}
   M \lhd N \quad :\Leftrightarrow  \quad \A \rh > 0 \E C>0 \A k : M_k \le C \rh^k N_k 
   \quad \Leftrightarrow  \quad \lim_k \Big(\frac{M_k}{N_k}\Big)^{1/k} =0.
 \end{align*}
Then $M \preceq N$ implies $\cE^{[M]} \subseteq \cE^{[N]}$ and $\La^{[M]} \subseteq \La^{[N]}$, and $M \lhd N$ implies 
$\cE^{\{M\}} \subseteq \cE^{(N)}$ and $\La^{\{M\}} \subseteq \La^{(N)}$. 
The converse implications hold if $M$ is a weight sequence; cf.\ \cite[Proposition 2.12]{RainerSchindl12} and \cite[Lemma 2.2]{KMRu}, 
that $\La^{(M)} \subseteq \La^{(N)}$ implies $M \preceq N$ follows from the argument in \cite{Bruna80/81}. 
In particular, \eqref{M3} holds if and only if the real analytic class is contained in $\cE^{\{M\}}$, and furthermore, if and only if 
the restrictions of all entire functions are contained in $\cE^{(M)}$. The inclusion of the real analytic class in $\cE^{(M)}$ 
is equivalent to the condition
\begin{align}
  m_k^{1/k} \to \infty. \label{M4}
\end{align}

A weight sequence $M=(M_k)$ is called \emph{quasianalytic} if 
\[
  \sum_{k=1}^\infty \frac{1}{\mu_k} = \infty, \quad \text{ or equivalently, } \quad \sum_{k=1}^\infty \frac{1}{M_k^{1/k}} = \infty.
\]
The famous Denjoy--Carleman theorem (cf.\ \cite[Theorem 2.1]{KMRu}) 
holds that $M=(M_k)$ is quasianalytic if and only if $\cE^{[M]}$ is \emph{quasianalytic}, i.e.,
for open connected $U\subseteq \R^n$ and each $a \in U$ the Borel mapping $f \mapsto (\p^\al f(a))_\al$ is injective on $\cE^{[M]}(U)$. 

\begin{remark} \label{bump}
  A class $\cE^{[M]}$ is called non-quasianalytic if it is not quasianalytic. This is equivalent to the fact that 
  there exist non-trivial $\cE^{[M]}$-functions with compact support.
\end{remark}

\section{A proof of Carleman's theorem}

We have the Borel mapping 
\begin{equation} \label{Borel}
  j^\infty : \cE^{[M]}_{0,n} \to \La^{[M]}_n, \quad f \mapsto (\p^\al f(0))_{\al \in \N^n}.
\end{equation}
If $M=(M_k)$ is a quasianalytic weight sequence, then this mapping is injective. 
In this section we will show that it is never surjective if $\cO_{0,n} \subsetneq \cE^{[M]}_{0,n}$.

\subsection{The Roumieu case}

Let us first concentrate on the Roumieu case.
Due to a theorem of Carleman, the mapping $j^\infty : \cE^{\{M\}}_{0,n} \to \La^{\{M\}}_n$ is 
never surjective if $\cO_{0,n} \subsetneq \cE^{\{M\}}_{0,n}$, or equivalently,  
\begin{equation*}
  \sup_k m_k^{1/k} = \infty.
\end{equation*}
A concise proof (following the main ideas of Carleman) may be found in \cite{Thilliez08}. 

We shall give another proof based on Bernstein's theorem (cf.\ \cite[p.~146]{Widder41} for a proof and \cite{Boas71} for a survey of related results) 
and the following elementary theorem 
due to Bang \cite{Bang53}. 
We reproduce the proof of the latter for the convenience of the reader and for the sake of completeness 
(cf.\ also \cite{NazarovSodinVolberg04}).

\begin{theorem}[Bang \cite{Bang53}] \label{thmBang}
  Let $M=(M_k)$ be a quasianalytic weight sequence and let $f \in C^\infty([0,1])$ satisfy
  \begin{equation} \label{assumption}
    \sup_{x \in [0,1]} |f^{(j)}(x)| \le M_j, \quad j \in \N. 
  \end{equation}
  If $f$ is not identically $0$ and for all $j \in \N$ there exists 
  $x_j \in [0,1]$ such that $f^{(j)}(x_j) = 0$, then the series $\sum_{j=0}^\infty |x_j-x_{j+1}|$ is divergent. 
\end{theorem}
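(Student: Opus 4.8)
The plan is to prove the contrapositive: if $f\in C^\infty([0,1])$ satisfies \eqref{assumption}, each $f^{(j)}$ vanishes at some $x_j\in[0,1]$, and $\sum_{j}|x_j-x_{j+1}|<\infty$, then $f\equiv 0$. First come the reductions. By \eqref{assumption} we have $f\in\cE^{\{M\}}((0,1))$, which is quasianalytic by the Denjoy--Carleman theorem, so it is enough to produce a point of $(0,1)$ at which all derivatives of $f$ vanish. If $f^{(p)}\equiv 0$ for some $p$, pick $p$ minimal: either $p=0$ and we are done, or $p\ge 1$, in which case $f^{(p-1)}$ is a nonzero constant with no zero in $[0,1]$, contradicting the existence of $x_{p-1}$; hence we may assume $a_k:=\sup_{[0,1]}|f^{(k)}|>0$ for all $k$, and since $f^{(k)}(x_k)=0$ with $|x-x_k|\le 1$ on $[0,1]$, integration gives $a_k\le a_{k+1}$. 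Because $\sum_j|x_j-x_{j+1}|<\infty$, the $x_j$ converge to some $x^*\in[0,1]$ with $|x_j-x^*|\le r_j:=\sum_{i\ge j}|x_i-x_{i+1}|\to 0$. Finally, passing from $f$ to $f^{(m)}/M_m$ (its derivatives are dominated by the renormalised shifted sequence $M_{m+k}/M_m$, again a quasianalytic weight sequence since $\sum_k M_{m+k-1}/M_{m+k}=\sum_{j>m}1/\mu_j=\infty$) replaces $\sum_j|x_j-x_{j+1}|$ by $r_m\to 0$; so I may assume that all $x_j$ lie in an interval $J\ni x^*$ of arbitrarily small length $|J|$.

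The main tool is the iterated-primitive identity: using $f^{(k)}(x_k)=0$ and the fundamental theorem of calculus repeatedly, for all $y$ and all $n$
\[
  f^{(j)}(y)=\int_{x_j}^{y}\int_{x_{j+1}}^{t_1}\cdots\int_{x_{j+n-1}}^{t_{n-1}}f^{(j+n)}(t_n)\,dt_n\cdots dt_1,
\]
whence $|f^{(j)}(y)|\le V_n(y)\,\|f^{(j+n)}\|$, with $V_n(y)$ the volume of the nested domain of integration. When $y$ and all the $x_i$ that occur lie in the short interval $J$, a simplex-type estimate, proved by a short induction on $n$, bounds $V_n(y)$ by $(c|J|)^n/n!$ for an absolute constant $c$; combined with \eqref{assumption} this yields, on $J$,
\[
  \sup_{J}|f^{(j)}|\ \le\ \frac{(c|J|)^n}{n!}\,M_{j+n}\qquad\text{for every }n\in\N.
\]

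It remains to deduce $f\equiv 0$, and this is the heart of the proof. The natural goal is to show that $\inf_{n}(c|J|)^nM_{j+n}/n!=0$ for every $j$, which gives $f^{(j)}(x^*)=0$ for all $j$ and finishes via quasianalyticity. Here lies the main obstacle. Writing $M_k=m_k\,k!$, the bound reads $\sup_J|f^{(j)}|\le M_j\prod_{i=1}^{n}\bigl(c|J|\,\mu_{j+i}/i\bigr)$; when $\cE^{\{M\}}$ is the real-analytic class the factors $c|J|\,\mu_{j+i}/i$ are, for $|J|$ small, eventually less than a constant $<1$, so the product tends to $0$ and we are done. For strictly larger quasianalytic classes $\mu_k/k$ is unbounded, so for any fixed $|J|>0$ these factors eventually exceed $1$ and $\inf_n$ is strictly positive; one must then exploit the full quasianalyticity hypothesis $\sum_k 1/\mu_k=\infty$ --- equivalently the integral form of the Denjoy--Carleman condition, or a Remez-type inequality in the spirit of Nazarov--Sodin--Volberg --- to choose the cut-off $n$ in terms of $j$ (and of $|J|$, which the reduction makes as small as needed) so as to push the estimate down to $0$. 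Making that final balancing work is the real content; the reductions and the iterated-primitive estimate above are routine.
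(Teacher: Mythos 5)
There is a genuine gap, and you name it yourself: the ``final balancing'' you defer is not a routine loose end but the entire content of Bang's theorem. Your iterated-primitive estimate gives $\sup_J|f^{(j)}|\le \inf_n (c|J|)^n M_{j+n}/n!$, and, as you correctly observe, for a quasianalytic class strictly larger than the real analytic one (where $\mu_k/k$ is unbounded) this infimum is attained at a finite $n$ and is strictly positive for every fixed $|J|>0$. Shrinking $|J|$ by passing to higher derivatives does not rescue this: for each fixed $j$ the resulting bound on $|f^{(j)}(x^*)|$ is still a fixed positive number, so the estimate you actually prove cannot yield $f^{(j)}(x^*)=0$, and no mechanism is supplied for feeding the hypothesis $\sum_k 1/\mu_k=\infty$ into the argument. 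Invoking ``a Remez-type inequality in the spirit of Nazarov--Sodin--Volberg'' at this point is naming the missing theorem rather than proving it; everything before that point is indeed routine, and everything after it is absent.

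For comparison, the paper's proof avoids iterated integration altogether. It works with the normalized maximal function $B_{f,N}(x)=\max_{j\ge N}|f^{(j)}(x)|/(e^jM_j)$ and proves, via a single Taylor expansion with remainder of order $k$, the multiplicative estimate $B_{f,N}(x+h)<\max\{B_{f,N}(x),e^{-k}\}\,e^{e|h|\mu_k}$ for $k>N$. Since $B_{f,N}=B_{f,N+1}$ at a zero of $f^{(N)}$, these functions can be concatenated along the polygonal path $x_0\to x_1\to\cdots$ into a single continuous function of arclength that tends to $0$; the multiplicative estimate then forces the arclength needed to descend from level $e^{-(k-1)}$ to level $e^{-k}$ to be at least $1/(e\mu_k)$, and summing over $k$ gives divergence of $\sum_j|x_j-x_{j+1}|$. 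If you wish to complete your route you must produce an inequality of exactly this strength --- one that converts each unit $1/\mu_k$ into a definite amount of travel between consecutive zeros --- which is precisely the step your proposal leaves open.
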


\begin{proof}
  For $N\in \N$ and $x \in [0,1]$ set  
  \[
    B_{f,N}(x) :=  \max _{j \ge N} \frac{|f^{(j)}(x)|}{e^j M_j} .
  \]
  Let us collect some properties of $B_{f,N}$:
  \begin{enumerate}
    \item $B_{f,N}(x) \le e^{-N}$,
    \item $B_{f,N}(x) \ge B_{f,N+1}(x)$, and $B_{f,N}(x) = B_{f,N+1}(x)$ if $f^{(N)}(x) = 0$,
    \item for all $k > N$ and all $x, x+h \in [0,1]$ $(h \ne0)$,  
    \[
      B_{f,N}(x+h) < \max\{B_{f,N}(x), e^{-k}\}\, e^{e |h| \mu_k}. 
    \]
  \end{enumerate} 
  (1) and (2) follow easily from the definition and from \eqref{assumption}. To see (3)
  let $k > N$, $N \le j < k$, and $x, x+h \in [0,1]$. Then, by Taylor's formula, for some $\xi$ between $x$ and $x+h$,
  \begin{align*}
    \frac{|f^{(j)}(x+h)|}{e^j M_j} 
    &\le \sum_{i = 0}^{k-j-1} \frac{|f^{(j+i)}(x)|\, |h|^i}{e^j M_j\, i!} 
    + \frac{ |f^{(k)}(\xi)|\, |h|^{k-j}}{e^j M_j\, (k-j)!}  \\
    &= \sum_{i = 0}^{k-j-1} \frac{M_{j+i}}{M_j}\frac{|f^{(j+i)}(x)| }{e^{j+i} M_{j+i}} \frac{(e|h|)^i}{i!} 
    + e^{-k} \frac{M_{k}}{M_j} \frac{ |f^{(k)}(\xi)|}{M_k} \frac{ (e|h|)^{k-j}}{(k-j)!} \\
    &\le B_{f,N}(x) \sum_{i = 0}^{k-j-1} \Big(\frac{M_{k}}{M_{k-1}}\Big)^i \frac{(e|h|)^i}{i!} 
    + e^{-k} \Big(\frac{M_{k}}{M_{k-1}}\Big)^{k-j}  \frac{ (e|h|)^{k-j}}{(k-j)!} \\
    &< \max\{B_{f,N}(x), e^{-k}\}\, e^{e |h| \mu_k},
  \end{align*}
  where we used that $M=(M_k)$ is log-convex.
  If $j \ge k$ then, by \eqref{assumption}, 
  \begin{align*}
    \frac{|f^{(j)}(x+h)|}{e^j M_j} \le e^{-j} < \max\{B_{f,N}(x), e^{-k}\}\, e^{e |h| \mu_k}.
  \end{align*}
  This implies (3).

  Let $f$ and $x_j$ be as in the theorem. 
  Set $\ta_k := \sum_{j=0}^{k-1} |x_j-x_{j+1}|$, $k \ge 1$, $\ta_0:= 0$, and define for $t \in [\ta_{N-1},\ta_N]$, 
  \[
    \tilde B_{f,N} (t) := 
    \begin{cases}
       B_{f,N}(x_{N-1} +\ta_{N-1} -t) & \text{if } x_N < x_{N-1}, \\ 
       B_{f,N}(x_{N-1} -\ta_{N-1} +t) & \text{if } x_N \ge x_{N-1}.
    \end{cases}
  \] 
  By (3), the function $\tilde B_{f,N}$ is continuous and, by (2), $\tilde B_{f,N}(\ta_N) = B_{f,N}(x_N) = B_{f,N+1}(x_N) =\tilde B_{f,N+1}(\ta_N)$. 
  So we obtain a continuous function $\tilde B_{f}$ on the interval $[0,\ta)$, where $\ta:=\sup_k \ta_k$, by setting 
  \[
    \tilde B_{f}(t) := \tilde B_{f,N}(t) \quad \text{ if } t \in [\ta_{N-1},\ta_N],~ N \ge 1.
  \]
  By (1) and (2) we find that $\tilde B_{f}(t) \le e^{-N}$ for all $t \ge \ta_{N-1}$ and hence $\tilde B_f(t) \to 0$ as $t \to \ta$. 
  Since $f$ and thus also $\tilde B_f$ does not vanish identically, 
  the range of $\tilde B_f$ contains all numbers $e^{-k}$ for sufficiently large $k$, say $k \ge k_0$. 
  So we may choose a strictly increasing sequence $t_k$ such that $\tilde B_f(t_k) = e^{-k}$ and $\tilde B_f(t) > e^{-k}$ for all 
  $t \in (t_{k-1},t_k)$ (recursively, take for $t_k$ the smallest $t \in \tilde B_f^{-1}(e^{-k})$ with $t>t_{k-1}$). 
  By (3) (applied to each interval in the subdivision of $(t_{k-1},t_k)$ induced by the points $\ta_N$ between $t_{k-1}$ and $t_k$) 
  we may conclude that 
  \[
    \tilde B_{f}(t_{k-1}) \le \tilde B_{f}(t_k)\, e^{e (t_k-t_{k-1}) \mu_k},
  \]
  or equivalently, 
  \[
     t_k-t_{k-1} \ge \frac1{e \mu_k},
  \]
  and therefore
  \[
    t_k \ge t_{k_0} + \frac1{e} \sum_{j=k_0+1}^k \frac1{\mu_j}.
  \]
  By the choice of the sequence $t_k$ we find that $\ta_k \ge t_k$, which implies the assertion as $M=(M_k)$ is quasianalytic.
\end{proof}

\begin{corollary}[Bang \cite{Bang53}] \label{corBang}
  Let $M=(M_k)$ be a quasianalytic weight sequence and let $f \in C^\infty([0,1])$ satisfy \eqref{assumption}.
  If $f^{(j)}(0) > 0$ for all $j \in \N$, then $f^{(j)}(x) > 0$ for all $x \in [0,1]$ and $j \in \N$.
\end{corollary}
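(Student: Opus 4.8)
The plan is to argue by contradiction and to reduce the statement to Theorem~\ref{thmBang} applied to a derivative of $f$. Suppose the conclusion fails. Then $f^{(j)}(x)\le 0$ for some $j\in\N$ and some $x\in(0,1]$ (it cannot happen at $x=0$, since $f^{(j)}(0)>0$), and since $f^{(j)}(0)>0$ the intermediate value theorem yields a zero of $f^{(j)}$ in $(0,1]$. Fix an index $j_0\in\N$ such that $f^{(j_0)}$ vanishes somewhere in $(0,1]$.

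The key observation is that, once a derivative has a zero, its \emph{smallest} zero moves strictly to the left under differentiation. Indeed, if $f^{(i)}$ has a smallest zero $\xi_i\in(0,1]$, then $f^{(i)}>0$ on $[0,\xi_i)$, so by the mean value theorem there is $\eta\in(0,\xi_i)$ with $f^{(i+1)}(\eta)=-f^{(i)}(0)/\xi_i<0$; since $f^{(i+1)}(0)>0$ and $f^{(i+1)}$ is continuous, $f^{(i+1)}$ has a zero in $(0,\eta)$, hence a smallest zero $\xi_{i+1}\in(0,1]$ with $\xi_{i+1}<\eta<\xi_i$. Starting at $i=j_0$ and iterating, I obtain smallest zeros $\xi_i\in(0,1]$ of $f^{(i)}$ for all $i\ge j_0$, and the sequence $(\xi_i)_{i\ge j_0}$ is strictly decreasing; thus $(\xi_{j_0+k})_{k\in\N}$ converges to some $L\in[0,1)$, and the telescoping sum $\sum_{k\ge0}|\xi_{j_0+k}-\xi_{j_0+k+1}|=\xi_{j_0}-L$ is \emph{finite}.

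Next I would set $N_k:=M_{j_0+k}/M_{j_0}$ and $h:=f^{(j_0)}/M_{j_0}$ and verify that Theorem~\ref{thmBang} applies to $h$ with the weight sequence $N=(N_k)$: from \eqref{assumption} for $f$ one gets $\sup_{[0,1]}|h^{(k)}|=M_{j_0}^{-1}\sup_{[0,1]}|f^{(j_0+k)}|\le N_k$; the function $h$ is not identically $0$ since $h(0)>0$; and by the previous paragraph $h^{(k)}=M_{j_0}^{-1}f^{(j_0+k)}$ vanishes at $\xi_{j_0+k}\in(0,1]$ for every $k\in\N$. Moreover $N$ is again a quasianalytic weight sequence: \eqref{M1} and \eqref{M2} for $N$ follow at once from those for $M$ (note $M_{j_0}\ge M_0=1$ and that $M_k$ is non-decreasing); \eqref{M3} for $N$, i.e.\ $\liminf_k n_k^{1/k}>0$ with $n_k:=N_k/k!$, is routine from \eqref{M3} for $M$ (e.g.\ $m_k\ge C^{-k}$ for some $C\ge1$, while $n_k^{1/k}=\big((j_0+k)!/(k!\,M_{j_0})\big)^{1/k}m_{j_0+k}^{1/k}$ and the first factor tends to $1$); and $\sum_{k\ge1}N_{k-1}/N_k=\sum_{k\ge1}1/\mu_{j_0+k}=\infty$ as a tail of a divergent series. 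Theorem~\ref{thmBang} then forces $\sum_{k\ge0}|\xi_{j_0+k}-\xi_{j_0+k+1}|=\infty$, contradicting the finiteness obtained above, and the corollary follows.

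I expect the only slightly fussy step to be checking that $N=(N_k)$ is a quasianalytic weight sequence, so that Theorem~\ref{thmBang} can be quoted verbatim; everything else is elementary, and the conceptual crux is simply that following the \emph{smallest} zeros of the successive derivatives makes the resulting Bang series telescope, hence converge.
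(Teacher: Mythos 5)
Your proof is correct and follows essentially the same route as the paper: produce, via the mean value theorem and the intermediate value theorem, a strictly decreasing sequence of zeros of the successive derivatives whose telescoping Bang series converges, contradicting Theorem~\ref{thmBang}. Your extra step of passing to $h=f^{(j_0)}/M_{j_0}$ with the shifted weight sequence $N_k=M_{j_0+k}/M_{j_0}$ (and checking that $N$ is again a quasianalytic weight sequence) is a welcome refinement rather than a detour, since Theorem~\ref{thmBang} as stated requires zeros of \emph{all} derivatives starting from order $0$ --- a point the paper's one-line proof, which invokes Rolle's theorem and applies the theorem to $f$ directly, glosses over.
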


\begin{proof}
  Suppose that $f^{(j)}(0) > 0$ for all $j \in \N$ and that some  
  derivative $f^{(j)}$ has a zero $x_{j} \in (0,1]$. By Rolle's theorem, we find a strictly decreasing sequence 
  $x_j > x_{j+1} > \cdots > 0$, where $x_k$ is a zero of $f^{(k)}$ for all $k \ge j$. This contradicts Theorem \ref{thmBang}.
\end{proof}

We may deduce not only that $j^\infty : \cE^{\{M\}}_{0,n} \to \La^{\{M\}}_n$ is not surjective if 
$\cO_{0,n} \subsetneq \cE^{\{M\}}_{0,n}$, 
but that there exist elements in $\La^{\{M\}}_n$ that are not contained in $j^\infty  \cE^{\{N\}}_{0,n}$ for 
\emph{any} quasianalytic weight sequence $N = (N_k)$.

\begin{theorem} \label{seqRoumieu}
  Let $M=(M_k)$ be a quasianalytic weight sequence such that $\cO_{0,n} \subsetneq \cE^{\{M\}}_{0,n}$. 
  Then there exist elements in $\La^{\{M\}}_n$ that are not contained in $j^\infty  \cE^{\{N\}}_{0,n}$ for 
  any quasianalytic weight sequence $N = (N_k)$.   
\end{theorem}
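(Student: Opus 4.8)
The plan is to reduce to dimension $n=1$ and to produce a single sequence $a=(a_k)_{k\in\N}$ of strictly positive reals that lies in $\La^{\{M\}}_1$ but in no $j^\infty\cE^{\{N\}}_{0,1}$ with $N$ quasianalytic. For higher $n$ one just pushes such a sequence into the first coordinate, i.e.\ sets $a_\al = a_{|\al|}$ when $\al = (|\al|,0,\dots,0)$ and $a_\al = 0$ otherwise; this stays in $\La^{\{M\}}_n$ since $M$ is log-convex, and a germ in $\cE^{\{N\}}_{0,n}$ realizing it would, after restriction to the first axis, give a germ in $\cE^{\{N\}}_{0,1}$ realizing $a$ (one has to be a little careful because only the pure first-axis derivatives are prescribed, but restricting to the line $x_2=\dots=x_n=0$ and using that the one-dimensional germ has derivatives $(a_k)$ does the job). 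So the heart of the matter is the one-variable statement.

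For $n=1$, the idea is to exploit the hypothesis $\sup_k m_k^{1/k} = \sup_k (M_k/k!)^{1/k} = \infty$, equivalently $k! \lhd$ fails to dominate $M$, to choose a strictly positive sequence $a=(a_k)$ with $a_k \le \rh^k M_k$ for some fixed $\rh>0$ (so $a \in \La^{\{M\}}_1$) but with $a_k$ growing so fast relative to $k!$ that $a$ cannot be the Taylor coefficient sequence of any real analytic germ at $0$: that is, $\limsup_k (a_k/k!)^{1/k} = \infty$, so the formal power series $\sum a_k x^k/k!$ has radius of convergence $0$. Concretely, pick a subsequence $k_1 < k_2 < \cdots$ along which $(m_{k_i})^{1/k_i} \to \infty$ and define $a_k$ to be a suitable positive interpolation, e.g.\ $a_k = M_k \cdot \si_k$ where $\si_k \in (0,1]$ is chosen log-convex-compatibly and equal to $1$ along the subsequence, ensuring both $a \in \La^{\{M\}}_1$ and $a$ defines a divergent formal series. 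The point of making $a$ \emph{strictly positive everywhere} is so that any $\cE^{\{N\}}$-germ $f$ realizing $a$ has $f^{(j)}(0) = a_j > 0$ for all $j$.

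Now suppose, for contradiction, that some quasianalytic weight sequence $N$ and some germ $f \in \cE^{\{N\}}_{0,1}$ satisfy $f^{(j)}(0) = a_j > 0$ for all $j$. Choosing a representative on $(-\ve,\ve)$ with $\sup_{[0,\ve]}|f^{(j)}| \le C\rh^j N_j$ for suitable constants, and rescaling the variable so that the interval becomes $[0,1]$ and absorbing constants into $N$ (replace $N_j$ by $C\rh^j N_j$, which is still a quasianalytic weight sequence — log-convexity, $\liminf$ condition, and the divergence of $\sum 1/N_j^{1/j}$ are all stable under such a modification), we are exactly in the situation of Corollary~\ref{corBang}: $f \in C^\infty([0,1])$, $\sup_{[0,1]}|f^{(j)}| \le N_j$, and $f^{(j)}(0) > 0$ for all $j$. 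Hence $f^{(j)}(x) > 0$ for all $x \in [0,1]$ and all $j$, i.e.\ $f$ is absolutely monotone on $[0,1]$. By Bernstein's theorem on absolutely monotone functions, $f$ extends to a function analytic on a neighborhood of $[0,1)$ in $\C$ (indeed $f(x) = \int_0^\infty e^{xt}\,d\mu(t)$ for a positive measure $\mu$, hence real analytic where the integral converges), so in particular the germ of $f$ at $0$ is real analytic and $\sum f^{(j)}(0) x^j/j! = \sum a_j x^j/j!$ has positive radius of convergence — contradicting our construction of $a$ as a divergent formal series.

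The main obstacle is the very first construction step: producing a single positive sequence $a$ that simultaneously (i) belongs to $\La^{\{M\}}_1$, (ii) is strictly positive at every index, and (iii) has $(a_k/k!)^{1/k}$ unbounded. Items (i) and (iii) pull in opposite directions — $a$ must be small enough to be dominated by $\rh^k M_k$ yet large enough to beat $k!$ infinitely often — and the existence of such an $a$ is precisely where the hypothesis $\sup_k m_k^{1/k} = \infty$ (the properness of the class) gets used; one must check carefully that choosing $a_k = M_k$ exactly along a fast subsequence and $a_k$ moderately (but positively) off it keeps $|a|^M_\rh$ finite, which follows since $m_k^{1/k}\to\infty$ along the subsequence forces $a_k/(\rh^k M_k) = \si_k \le 1$ throughout. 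Everything after that — the rescaling to $[0,1]$, the application of Corollary~\ref{corBang}, and the appeal to Bernstein — is routine given the results already in the paper.
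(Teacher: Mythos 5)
Your proposal is correct and follows essentially the same route as the paper: reduce to $n=1$, observe via Corollary~\ref{corBang} (after rescaling to $[0,1]$) and Bernstein's theorem that any strictly positive sequence in the image of $j^\infty\cE^{\{N\}}_{0,1}$ for a quasianalytic $N$ must come from a real analytic germ, and then pick a positive non-analytic element of $\La^{\{M\}}_1$ using $\sup_k m_k^{1/k}=\infty$. The only difference is that you treat the construction of $a$ as the delicate step, whereas it is immediate --- $a_k := M_k$ is strictly positive, lies in $\La^{\{M\}}_1$ with $\rh=1$, and satisfies $(a_k/k!)^{1/k}=m_k^{1/k}$ unbounded, so no interpolation along a subsequence is needed.
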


\begin{proof}
  Without loss of generality we may assume that $n=1$.
  Let $a=(a_j) \in \La^{\{M\}}_1$ be positive, i.e., $a_j >0$ for all $j$. 
  Let $N =(N_k)$ be any quasianalytic weight sequence. 
  We claim that 
  if there exists $f \in \cE^{\{N\}}_{0,1}$ such that $j^\infty f = a$ then $f \in \cO_{0,1}$.  
  There is $r>0$ such that $f \in \cE^{\{N\}}((-r,r))$ and $0< r_1 <r$ and $\rh>0$ such that 
  \begin{equation*}
    \sup_{x \in [0,r_1]} |f^{(j)}(x)| \le \rh^{j+1} N_j, \quad j \in \N; 
  \end{equation*}
  abusing notation we denote germs and its representatives by the same symbol.
  Let us define $\tilde f (x) := \rh^{-1} f(r_1 x)$ and $\tilde N_j := (\rh\, r_1)^j N_j$. Then  
  \begin{equation*}
    \sup_{x \in [0,1]} |\tilde f^{(j)}(x)| \le \tilde N_j, \quad j \in \N, 
  \end{equation*}
  and hence Corollary \ref{corBang} implies that $\tilde f^{(j)}(x) >0$ for all $x \in [0,1]$ and all $j \in \N$, 
  that is $f^{(j)}(x) >0$ for all $x \in [0,r_1]$ and all $j \in \N$. 
  By Bernstein's theorem (e.g.\ \cite[p.~146]{Widder41}), $f \in \cO_{0,1}$. 

  Thus if $a=(a_j)$ is chosen such that it does not define a real analytic germ, which is possible by the assumption 
  $\cO_{0,n} \subsetneq \cE^{\{M\}}_{0,n}$, then it cannot belong to $j^\infty  \cE^{\{N\}}_{0,1}$ for any 
  quasianalytic weight sequence $N = (N_k)$.
\end{proof}

\subsection{The Beurling case} 

Here we assume that $\cO_{0,n} \subsetneq \cE^{(M)}_{0,n}$ which is equivalent to the condition \eqref{M4}, i.e., 
$m_k^{1/k} \to \infty$.
We shall use the following representation result which is a special case of Proposition \ref{Beurlingrepresentation} below.

\begin{proposition}[{\cite[Proposition 2.12]{RainerSchindl12}}] \label{repBeurling}
  If $M=(M_k)$ is a positive sequence such that $m_k^{1/k} \to \infty$, then
  \begin{align*}
    \La^{(M)}_n = \bigcup \big\{\La^{\{L\}}_n : L \lhd M,~ \ell_k^{1/k} \to \infty\big\}. 
  \end{align*}
\end{proposition}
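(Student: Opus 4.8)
The plan is to establish the two inclusions separately. The inclusion ``$\supseteq$'' is immediate from the implications recalled in Section~\ref{weights}: if $L \lhd M$, then $\La^{\{L\}}_n \subseteq \La^{(M)}_n$, so the union on the right-hand side is contained in $\La^{(M)}_n$ regardless of the auxiliary condition $\ell_k^{1/k} \to \infty$.

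For the inclusion ``$\subseteq$'', take $a = (a_\al) \in \La^{(M)}_n$ and set $A_k := \max_{|\al| = k} |a_\al|$, a finite maximum; since $a \in \La^{(M)}_n$ we have $|a|^M_1 < \infty$, hence $A_k \le |a|^M_1\, M_k < \infty$. The idea is to produce a positive sequence $L = (L_k)$ of the special shape $L_k := M_k\, \ep_k^k$ with $\ep_k \to 0$, chosen so that $L_k \ge A_k$ for all $k \ge 1$. Granting this, $a \in \La^{\{L\}}_n$ (with constant $\rh = 1$), while $(L_k/M_k)^{1/k} = \ep_k \to 0$ gives $L \lhd M$, and $\ell_k^{1/k} = (M_k/k!)^{1/k}\,\ep_k = m_k^{1/k}\,\ep_k$.

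Thus the whole matter reduces to choosing a null sequence $\ep_k$ meeting two lower bounds: first $\ep_k \ge \de_k := (A_k/M_k)^{1/k}$, with the convention $\de_k := 0$ when $A_k = 0$, which is exactly what forces $L_k \ge A_k$; and second $m_k^{1/k}\,\ep_k \to \infty$, which is exactly what forces $\ell_k^{1/k} \to \infty$. The first bound costs nothing, since $a \in \La^{(M)}_n$ implies that for every $\rh > 0$ there is $C_\rh$ with $A_k \le C_\rh\, \rh^k M_k$, whence $\limsup_k \de_k \le \rh$ for all $\rh > 0$, i.e.\ $\de_k \to 0$. For the second we use the hypothesis $m_k^{1/k} \to \infty$: setting $c_k := \inf_{j \ge k} m_j^{1/j}$ yields a non-decreasing sequence with $c_k \to \infty$ and $c_k \le m_k^{1/k}$, and then $\et_k := \min\{1, c_k^{-1/2}\}$ tends to $0$ while $m_k^{1/k}\,\et_k \ge c_k^{1/2} \to \infty$. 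Taking $\ep_k := \max\{\de_k, \et_k\}$ produces a null sequence satisfying both lower bounds, which finishes the argument; note that $n$ plays no role.

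The step I expect to be the only delicate one is this last balancing act: the domination requirement inflates $\ep_k$ from below by $\de_k$ and the requirement $\ell_k^{1/k} \to \infty$ inflates it from below by $\et_k$, and one must check that these two demands are simultaneously compatible with $\ep_k \to 0$. They are, and for a structural reason: membership of $a$ in $\La^{(M)}_n$ forces $\de_k \to 0$, while the standing assumption $m_k^{1/k} \to \infty$ opens a genuine gap between $k!$ and $M_k$ into which $L$ can be squeezed with $\ell_k^{1/k} \to \infty$; every other step is a one-line estimate.
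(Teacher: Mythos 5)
Your proof is correct and follows essentially the same route as the paper: the nontrivial inclusion is obtained by explicitly constructing one auxiliary sequence squeezed geometrically between the data $\big(\max_{|\al|=k}|a_\al|\big)_k$ together with $k!$ on one side and $M$ on the other, which is exactly what the paper does via $L_k=\max\{\max_{|\al|=k}|a_\al|,\,k!\}$ followed by the geometric-mean interpolation of Lemma~\ref{LNM}. Your multiplicative construction $L_k=M_k\,\ep_k^k$ is a self-contained single-sequence version of that interpolation and simply avoids the block decomposition of Lemma~\ref{LNM}, which is only needed for genuine weight matrices.
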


This proposition allows us to reduce the Beurling to the Roumieu case.

\begin{theorem}
  Let $M=(M_k)$ be a quasianalytic weight sequence such that $\cO_{0,n} \subsetneq \cE^{(M)}_{0,n}$. 
  Then there exist elements in $\La^{(M)}_n$ that are not contained in $j^\infty  \cE^{\{N\}}_{0,n}$ for 
  any quasianalytic weight sequence $N = (N_k)$.  
\end{theorem}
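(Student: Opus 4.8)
The idea is to deduce this Beurling statement from the Roumieu result already proved in Theorem~\ref{seqRoumieu}, using the representation in Proposition~\ref{repBeurling} to convert a Beurling sequence space into a union of Roumieu sequence spaces. First, as in the Roumieu proof, I would reduce to the case $n=1$: the general statement follows from the one-dimensional one by viewing a sequence on $\N$ as a sequence on $\N^n$ supported on the first coordinate axis, exactly as in Theorem~\ref{seqRoumieu}. So assume $n=1$ and let $M=(M_k)$ be a quasianalytic weight sequence with $m_k^{1/k}\to\infty$.

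Next I would produce a suitable witness sequence. The point is that $\La^{(M)}_1$ itself is large: by Proposition~\ref{repBeurling} we have $\La^{(M)}_1 = \bigcup\{\La^{\{L\}}_1 : L\lhd M,~\ell_k^{1/k}\to\infty\}$, so it suffices to find \emph{one} weight sequence $L$ with $L\lhd M$ and $\ell_k^{1/k}\to\infty$, and then a strictly positive element $a=(a_j)\in\La^{\{L\}}_1$ that does not define a real analytic germ. Since $\ell_k^{1/k}\to\infty$ in particular forces $\sup_k \ell_k^{1/k}=\infty$, the class $\cE^{\{L\}}$ is properly larger than $\cO$, so such a positive non-real-analytic $a$ exists (e.g.\ take $a_j = \rho^{j}L_j$ for suitable $\rho$, which lies in $\La^{\{L\}}_1$ and grows too fast to be real analytic). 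One should also make sure $L$ can be taken to be a genuine weight sequence satisfying \eqref{M1}--\eqref{M3}; if necessary, pass to its log-convex minorant, which only shrinks the associated sequence space and preserves $\lhd M$ and the condition $\ell_k^{1/k}\to\infty$ (the latter is automatic once $m_k^{1/k}\to\infty$ via \eqref{M4}). Constructing such an $L$ with $L\lhd M$ is routine: for instance interpolate, or use that $m_k^{1/k}\to\infty$ to thin out the sequence.

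Finally, the key observation is that the conclusion of Theorem~\ref{seqRoumieu} does not depend on which Roumieu class the element of the target space nominally lives in — it asserts that a positive sequence can lie in $j^\infty\cE^{\{N\}}_{0,1}$ for a quasianalytic $N$ only if it is the jet of a real analytic germ. Concretely: our chosen $a=(a_j)$ is strictly positive and, by Proposition~\ref{repBeurling}, belongs to $\La^{(M)}_1$. Suppose $a = j^\infty f$ for some $f\in\cE^{\{N\}}_{0,1}$ with $N$ a quasianalytic weight sequence. Then the argument in the proof of Theorem~\ref{seqRoumieu} applies verbatim: after rescaling $\tilde f(x):=\rho^{-1}f(r_1 x)$ and $\tilde N_j := (\rho r_1)^j N_j$ one gets $\sup_{[0,1]}|\tilde f^{(j)}|\le \tilde N_j$ and $\tilde f^{(j)}(0)>0$ for all $j$, so Corollary~\ref{corBang} gives $f^{(j)}>0$ on a neighbourhood of $0$, and Bernstein's theorem forces $f\in\cO_{0,1}$. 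Thus $a$ defines a real analytic germ — contradicting the choice of $a$. Hence $a$ lies in $\La^{(M)}_1$ but in no $j^\infty\cE^{\{N\}}_{0,1}$ with $N$ quasianalytic, which is the assertion.

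\textbf{Main obstacle.} The only nontrivial point is the existence of a strictly positive, non-real-analytic element of $\La^{(M)}_1$ — i.e.\ choosing $L\lhd M$ with $\ell_k^{1/k}\to\infty$ that is (or can be replaced by) a weight sequence, and then a positive sequence in $\La^{\{L\}}_1$ failing to be a real analytic jet. The condition $m_k^{1/k}\to\infty$ (equivalently $\cO_{0,1}\subsetneq\cE^{(M)}_{0,1}$) is exactly what makes this possible, and Proposition~\ref{repBeurling} packages the needed sequence-space decomposition; once $a$ is in hand the rest is a direct appeal to the Roumieu argument. Everything else is bookkeeping about the relations $\preceq$, $\lhd$ and passage to log-convex minorants, all of which is standard and covered by the results cited in Section~\ref{weights}.
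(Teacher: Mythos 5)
Your proposal is correct and follows essentially the same route as the paper: both reduce the Beurling case to the Roumieu case by using Proposition~\ref{repBeurling} to locate a weight sequence $L\lhd M$ with $\ell_k^{1/k}\to\infty$ (passing to the log-convex minorant to make it a genuine quasianalytic weight sequence with $\cO_{0,n}\subsetneq\cE^{\{L\}}_{0,n}$), and then invoke the positive-sequence/Bang/Bernstein argument of Theorem~\ref{seqRoumieu}. The only cosmetic difference is that you re-run that argument explicitly rather than citing Theorem~\ref{seqRoumieu} as a black box.
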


In particular, there are elements in $\La^{(M)}_n$ not contained in 
$j^\infty  \cE^{(N)}_{0,n}$ for any quasianalytic weight sequence $N = (N_k)$, 
since always $\cE^{(N)} \subseteq \cE^{\{N\}}$.

\begin{proof}
  Let $L=(L_k)$ be a positive sequence satisfying $L \lhd M$ and $\ell_k^{1/k} \to \infty$. 
  Let $\underline L=(\underline L_k)$ denote the log-convex minorant of $L$. We still have $\underline L \lhd M$ 
  and $\underline \ell_k^{1/k} \to \infty$; cf.\ \cite[Lemma 2.6 and Theorem 2.15]{RainerSchindl12}. 
  Thus $\underline L$ is a quasianalytic weight sequence (since so is $M$). The condition $\underline \ell_k^{1/k} \to \infty$
  implies that $\cO_{0,n} \subsetneq \cE^{\{\underline L\}}_{0,n}$. 

  By Theorem \ref{seqRoumieu}, there exist elements in $\La^{\{\underline L\}}_n$ 
  that are not in $j^\infty \cE^{\{N\}}_{0,n}$ for any quasianalytic weight sequence $N=(N_k)$. 
  This implies the statement by Proposition~\ref{repBeurling}. 
\end{proof}

\section{Weight functions, weight matrices,\\ and Braun--Meise--Taylor classes}

\subsection{Weight functions} \label{weightfunction}

A \emph{weight function} is a continuous increasing function $\om : [0,\infty) \to [0,\infty)$ with $\om|_{[0,1]} =0$ and 
$\lim_{t \to \infty} \om(t) = \infty$ that satisfies
\begin{align}
   & \om(2t) = O(\om(t)) \quad\text{ as } t \to \infty, \label{om1}\\
   & \om(t) = O(t) \quad\text{ as } t \to \infty, \label{om2}\\
   & \log t = o(\om(t)) \quad\text{ as } t \to \infty, \label{om3}\\
   & \vh(t) := \om(e^t) \text{ is convex}.  \label{om4}
\end{align}

For a weight function $\om$ we consider the \emph{Young conjugate} $\vh^*$ of $\vh$,
\[
  \vh^*(x) := \sup_{y\ge 0} xy-\vh(y), \quad x \ge 0,
\] 
which is a convex increasing function satisfying $\vh^*(0)=0$, $\vh^{**}=\vh$, and $x/\vh^*(x) \to 0$ as $x \to \infty$; 
see \cite{BMT90}.

\subsection{Braun--Meise--Taylor classes and its germs} \label{BMT}

Let $\om$ be a weight function and let $U \subseteq \R^n$ be an open non-empty set. 
Then the set $\cE^{\{\om\}}(U)$ of all $f \in C^\infty (U)$ such that for all compact $K \subseteq U$ there exists 
$\rh>0$ with   
\[
  \|f\|^\om_{K,\rh} := \sup_{x \in K,~\al \in \N^n} \frac{|\p^\al f(x)|}{\exp(\frac{1}{\rh} \vh^*(\rh |\al|))} < \infty,
\]
is called the \emph{Braun--Meise--Taylor class of Roumieu type} associated with $\om$. 
It is endowed with the natural projective topology over $K$ and 
the inductive topology over $\rh \in \N$. Analogously, we define the \emph{Braun--Meise--Taylor class of Beurling type} $\cE^{(\om)}(U)$
consisting of all $f \in C^\infty (U)$ such that for all compact $K \subseteq U$ and all 
$\rh>0$, $\|f\|^\om_{K,\rh}<\infty$, and endow it with its natural Fr\'echet topology ($1/\rh \in \N$).    

Let us define the rings of germs at $0 \in \R^n$,
\begin{align*}
   \cE^{\{\om\}}_{0,n} &:= \on{ind}_{k \in \N} \cE^{\{\om\}}((-\tfrac{1}{k},\tfrac{1}{k})^n) \\ 
   \cE^{(\om)}_{0,n} &:= \on{ind}_{k \in \N} \cE^{(\om)}((-\tfrac{1}{k},\tfrac{1}{k})^n). 
\end{align*} 
and consider the sequence spaces 
\begin{align*}
   \La^{\{\om\}}_{n} &:= \{a=(a_\al) \in \C^{\N^n}  : \E \rh >0 : |a|^\om_\rh < \infty  \}, \\ 
   \La^{(\om)}_{n} &:= \{a=(a_\al) \in \C^{\N^n}  : \A \rh >0 : |a|^\om_\rh < \infty  \}, 
\end{align*}
where
\[
  |a|^\om_{\rh} := \sup_{\al \in \N^n} \frac{|a_\al|}{\exp(\frac{1}{\rh} \vh^*(\rh |\al|))}.
\]
Then $\La^{\{\om\}}_{n}$ is an (LB)-space and $\La^{(\om)}_{n}$ is a Fr\'echet space.

With $\om(t) = t$ we recover the real analytic functions $\cE^{\{t\}}(U) = C^\om(U)$ and restrictions of the entire 
functions $\cE^{(t)}(U) = \cH(\C^n)$ if $U$ is connected. 

Again $\cE^{[\om]}$ stands for either $\cE^{\{\om\}}$ or $\cE^{(\om)}$, $\La^{[\om]}_n$ for $\La^{\{\om\}}_n$ or $\La^{(\om)}_n$, etc.

For weight functions $\om$ and $\si$ we define 
\[
  \om \preceq \si \quad :\Leftrightarrow \quad \si(t) = O(\om(t)) \text{ as } t \to \infty
\]
and 
\[
  \om \lhd \si \quad :\Leftrightarrow \quad \si(t) = o(\om(t)) \text{ as } t \to \infty.
\]
Then $\om \preceq \si$ if and only if $\cE^{[\om]} \subseteq \cE^{[\si]}$ if and only if $\La^{[\om]} \subseteq \La^{[\si]}$, 
and  
$\om \lhd \si$ if and only if 
$\cE^{\{\om\}} \subseteq \cE^{(\si)}$ if and only if $\La^{\{\om\}} \subseteq \La^{(\si)}$; 
cf.\ \cite[Corollary 5.17]{RainerSchindl12}.
In particular, \eqref{om2} holds if and only if the real analytic class is contained in $\cE^{\{\om\}}$, and furthermore, if and only if 
the restrictions of all entire functions are contained in $\cE^{(\om)}$. The inclusion of the real analytic class in $\cE^{(\om)}$ 
is equivalent to the condition
\begin{equation*}
  \om(t) = o(t) \quad \text{ as } t \to \infty. \label{om5}
\end{equation*}

A weight function $\om$ is called \emph{quasianalytic} if 
\[
  \int_1^\infty \frac{\om(t)}{t^2} \,dt =\infty.
\]
This condition is equivalent to quasianalyticity of $\cE^{[\om]}$.

\subsection{The weight matrix associated with a weight function}

Given a weight function $\om$ we may associate a \emph{weight matrix} $\fW = \{W^x\}_{x>0}$ by setting 
\begin{equation*}
  W^x_k := \exp(\tfrac{1}{x}\vh^*(x k)), \quad k \in \N.
\end{equation*}
By the properties of $\vh^*$, each $W^x$ is a weight sequence (in the sense of Section \ref{weights}) and $W^x \le W^y$ if $x \le y$.

The weight function $\om$ is quasianalytic if and only if each (equivalently, some) $W^x$ is quasianalytic; 
see \cite[Corollary 5.8]{RainerSchindl12}. 

The associated weight matrix $\fW$ allows us to describe any Braun--Meise--Taylor class $\cE^{[\om]}$ as 
a union or an intersection of Denjoy--Carleman classes.

\begin{theorem}[{\cite[Corollary 5.15]{RainerSchindl12}}] \label{representation}
  Let $\om$ be a weight function and let $\fW = \{W^x\}_{x>0}$ be the associated weight matrix. 
  Let $U \subseteq \R^n$ be any open non-empty set.
  Then
  \begin{align*}
    \cE^{\{\om\}}(U) &= \on{proj}_{K \subseteq U} \on{ind}_{x > 0} \on{ind}_{\rh>0} \cE^{W^x}_\rh(K), \\
    \cE^{(\om)}(U) &= \on{proj}_{K \subseteq U} \on{proj}_{x > 0} \on{proj}_{\rh>0} \cE^{W^x}_\rh(K) 
  \end{align*}
  as locally convex spaces ($K$ runs through a compact exhaustion of $U$). Here $\cE^{W^x}_\rh(K)$ denotes the Banach space
  \[
    \cE^{W^x}_\rh(K) := \{f \in C^\infty(K) : \|f\|^{W^x}_{K,\rh}<\infty\}.
  \]
\end{theorem}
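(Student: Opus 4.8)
The plan is to reduce the two asserted identities of locally convex spaces to a single elementary inequality among the sequences $W^x$, and then to deduce that inequality from property \eqref{om1} via the Legendre transform.

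Since $W^\rh_{|\al|}=\exp\!\big(\tfrac1\rh\vh^*(\rh|\al|)\big)$, the defining seminorm $\|\cdot\|^\om_{K,\rh}$ of $\cE^{[\om]}(U)$ over a compact $K$ with parameter $\rh$ \emph{coincides} with $\|\cdot\|^{W^\rh}_{K,1}$; reading off the definitions of the classes and their topologies one therefore gets
\[
  \cE^{\{\om\}}(U)=\on{proj}_{K}\on{ind}_{\rh>0}\cE^{W^\rh}_1(K),\qquad
  \cE^{(\om)}(U)=\on{proj}_{K}\on{proj}_{\rh>0}\cE^{W^\rh}_1(K)
\]
as locally convex spaces. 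It remains to show that, over each $K$, the diagonal subsystem $\{\cE^{W^\rh}_1(K)\}_{\rh>0}$ generates the same locally convex space as the full family $\{\cE^{W^x}_\rh(K)\}_{x,\rh>0}$ --- as an inductive limit in the Roumieu case, as a projective limit in the Beurling case. In both cases one inclusion of systems is immediate, since $W^x\le W^y$ for $x\le y$ and enlarging $\rh$ gives continuous inclusions of the Banach spaces $\cE^{W^x}_\rh(K)$. For the reverse one needs, for arbitrary $x_0,\rh_0>0$, a continuous inclusion $\cE^{W^{x_0}}_{\rh_0}(K)\hookrightarrow\cE^{W^{x_1}}_1(K)$ for some $x_1$ (Roumieu), resp.\ $\cE^{W^{x_1}}_1(K)\hookrightarrow\cE^{W^{x_0}}_{\rh_0}(K)$ for some $x_1$ (Beurling), which is equivalent to the estimate $\rh_0^k W^{x_0}_k\le C\,W^{x_1}_k$ for all $k$, resp.\ $W^{x_1}_k\le C\,\rh_0^k W^{x_0}_k$ for all $k$, with a suitable constant $C$. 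So the whole statement reduces to: for all $x_0,\rh_0>0$ there are $x_1,C>0$ with $\rh_0^k W^{x_0}_k\le C\,W^{x_1}_k$ for all $k$, and likewise with $W^{x_1}_k\le C\,\rh_0^k W^{x_0}_k$.

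To establish this, first iterate \eqref{om1}: there are $L>1$, $C_0\ge0$ with $\om(2t)\le L\om(t)+C_0$, hence $\om(2^nt)\le L^n(\om(t)+C_0')$, hence for each $\la\ge1$ there are $L_\la,C_\la$ with $\om(\la t)\le L_\la(\om(t)+C_\la)$ for all $t\ge0$; equivalently $\vh(s+\log\la)\le L_\la(\vh(s)+C_\la)$. The factor $\rh_0^k$ corresponds to translating the argument of $\vh$ by $\log\rh_0$, which \eqref{om1} controls. For the first inequality assume $\rh_0>1$ (for $\rh_0\le1$ take $x_1=x_0$, $C=1$): from the Legendre-duality identity $\vh^*(u)+u\log\rh_0=[\,\vh(\cdot-\log\rh_0)\,]^*(u)$ (using that $\vh$ vanishes on $(-\infty,0]$) and from $\vh(s-\log\rh_0)=\om(e^s/\rh_0)\ge L_{\rh_0}^{-1}\vh(s)-C_{\rh_0}$, passing to conjugates gives $\vh^*(u)+u\log\rh_0\le L_{\rh_0}^{-1}\vh^*(L_{\rh_0}u)+C_{\rh_0}$; with $u=x_0k$ and $x_1:=L_{\rh_0}x_0$ this reads $\rh_0^k W^{x_0}_k\le e^{C_{\rh_0}/x_0}W^{x_1}_k$. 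The second inequality follows symmetrically: for $\rh_0<1$ put $\la=1/\rh_0$, use $\vh(s-\log\rh_0)=\om(\la e^s)\le L_\la(\vh(s)+C_\la)$, pass to conjugates, and take $x_1:=x_0/L_\la$ (for $\rh_0\ge1$ take $x_1=x_0$).

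This upgrades the set-level coincidences to the stated topological identities: the diagonal subsystem being cofinal makes the Roumieu inductive limits isomorphic as locally convex spaces, its system of seminorms being equivalent to the full one identifies the Beurling Fr\'echet spaces, and applying $\on{proj}_K$ over a compact exhaustion preserves this. The step I expect to require the most care is the third paragraph --- making the Legendre-transform identities and inequalities fully precise (the half-line restriction in the definition of $\vh^*$ enters but always so that the relevant inequality points the right way, and the constants may legitimately depend on the now-fixed $x_0,\rh_0$) --- and then being scrupulous that the limit comparison is carried out with genuinely continuous linking maps, so that one obtains isomorphisms of locally convex spaces rather than merely bijections.
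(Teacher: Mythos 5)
Your argument is correct and is essentially the approach the paper relies on: the statement is quoted from \cite[Corollary 5.15]{RainerSchindl12}, and the proof indicated there (and reproduced in this paper for the sequence-space analogue, Proposition~\ref{repLa}) rests on exactly your two ingredients, namely the diagonal identity $\|\cdot\|^{\om}_{K,\rh}=\|\cdot\|^{W^\rh}_{K,1}$ (cf.\ \eqref{5.00}) and the estimate $\si^k W^x_k\le C\,W^{Hx}_k$ of \cite[Lemma 5.9]{RainerSchindl12} (cf.\ \eqref{5.10}), which you rederive from \eqref{om1} by the same Young-conjugate computation. No substantive differences to report.
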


\subsection{Weight matrices and associated ultradifferentiable classes}

More abstractly, we define a \emph{weight matrix} to be a  
family of weight sequences $\fM = \{M^x\}_{x \in X}$ indexed by a subset $X \subseteq \R$ such that 
\begin{equation} \label{order}
  M^x \le M^y \quad \text{if }  x \le y.  
\end{equation}

For a weight matrix $\fM = \{M^x\}_{x \in X}$ and an open non-empty set $U \subseteq \R^n$, we define the 
locally convex spaces 
\begin{align*}
    \cE^{\{\fM\}}(U) &= \on{proj}_{K \subseteq U} \on{ind}_{x > 0} \on{ind}_{\rh>0} \cE^{M^x}_\rh(K), \\
    \cE^{(\fM)}(U) &= \on{proj}_{K \subseteq U} \on{proj}_{x > 0} \on{proj}_{\rh>0} \cE^{M^x}_\rh(K),
\end{align*}
its rings of germs at $0 \in \R^n$,
\begin{align*}
   \cE^{\{\fM\}}_{0,n} &:= \on{ind}_{k \in \N} \cE^{\{\fM\}}((-\tfrac{1}{k},\tfrac{1}{k})^n), \\ 
   \cE^{(\fM)}_{0,n} &:= \on{ind}_{k \in \N} \cE^{(\fM)}((-\tfrac{1}{k},\tfrac{1}{k})^n), 
\end{align*} 
and the sequence spaces 
\begin{align*}
   \La^{\{\fM\}}_{n} &:= \{a=(a_\al) \in \C^{\N^n}  : \E x \in X \E \rh >0 : |a|^{M^x}_\rh < \infty  \}, \\ 
   \La^{(\fM)}_{n} &:= \{a=(a_\al) \in \C^{\N^n}  : \A x \in X \A \rh >0 : |a|^{M^x}_\rh < \infty  \}
\end{align*}
with the natural (LB)- and Fr\'echet topology. As usual $\cE^{[\fM]}$ means either $\cE^{\{\fM\}}$ or $\cE^{(\fM)}$, etc.

For weight matrices $\fM=\{M^x\}_{x \in X}$ and $\fN = \{N^y\}_{y \in Y}$ we define 
\begin{align*}
  \fM \{\preceq\} \fN \quad &: \Leftrightarrow \quad \A x \in X \E y \in Y : M^x \preceq N^y \\
  \fM (\preceq) \fN \quad &: \Leftrightarrow \quad \A y \in Y \E x \in X  : M^x \preceq N^y
\end{align*}
and 
\begin{align*}
  \fM \{\lhd) \fN \quad &: \Leftrightarrow \quad \A x \in X \A y \in Y : M^x \lhd N^y.
\end{align*}
Then $\fM [\preceq] \fN$ if and only if $\cE^{[\fM]} \subseteq \cE^{[\fN]}$  if and only if $\La^{[\fM]} \subseteq \La^{[\fN]}$, 
and $\fM \{\lhd) \fN$ if and only if $\cE^{\{\fM\}} \subseteq \cE^{(\fN)}$  if and only if $\La^{\{\fM\}} \subseteq \La^{(\fN)}$; 
see \cite[Proposition 4.6]{RainerSchindl12}.

Analogously to Theorem \ref{representation} we get:

\begin{proposition} \label{repLa}
  Let $\om$ be a weight function and let $\fW = \{W^x\}_{x>0}$ be the associated weight matrix.
  Then
  \begin{align*}
    \La^{\{\om\}}_n = \La^{\{\fW\}}_{n} \quad \text{and} \quad   
    \La^{(\om)}_n = \La^{(\fW)}_{n}  
  \end{align*}
  as locally convex spaces. 
\end{proposition}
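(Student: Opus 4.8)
The plan is to deduce Proposition~\ref{repLa} from the machinery behind Theorem~\ref{representation}, exploiting that both sides of the claimed identities are built from \emph{one and the same} family of Banach spaces, merely organized over different index sets. The decisive elementary observation is that, directly from the definition $W^\rho_k=\exp(\tfrac1\rho\vh^*(\rho k))$, one has
\[
  |a|^\om_\rho=\sup_{\al\in\N^n}\frac{|a_\al|}{\exp(\tfrac1\rho\vh^*(\rho|\al|))}=\sup_{\al\in\N^n}\frac{|a_\al|}{W^\rho_{|\al|}}=|a|^{W^\rho}_1 .
\]
Writing $\La^{W^x}_\rho:=\{a\in\C^{\N^n}:|a|^{W^x}_\rho<\infty\}$ for the Banach space attached to the weight sequence $W^x$ and the parameter $\rho$, this means $\La^{\{\om\}}_n=\on{ind}_{\rho}\La^{W^\rho}_1$ and $\La^{(\om)}_n=\on{proj}_{\rho}\La^{W^\rho}_1$, whereas by definition $\La^{\{\fW\}}_n=\on{ind}_{x}\on{ind}_{\rho}\La^{W^x}_\rho$ and $\La^{(\fW)}_n=\on{proj}_{x}\on{proj}_{\rho}\La^{W^x}_\rho$. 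In particular the spectrum defining the $\om$-space is, tautologically, a subspectrum of the one defining the $\fW$-space (take matrix index $x=\rho$ and norm parameter $1$), which already yields the continuous inclusions $\La^{\{\om\}}_n\hookrightarrow\La^{\{\fW\}}_n$ and $\La^{(\fW)}_n\hookrightarrow\La^{(\om)}_n$.

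For the reverse inclusions, and for the identification of the locally convex topologies, I would show that the $\om$-subspectrum is cofinal in the Roumieu case and coinitial in the Beurling case. Concretely it suffices to establish the weight comparison: for all $x>0$ and $\rho>0$ there are $\sigma>0$ and $C>0$ with
\[
  \rho^k\,W^x_k\le C\,W^\sigma_k\qquad(k\in\N)
\]
(for the Roumieu direction), together with the dual statement that for all $x>0$ and $\rho>0$ there are $\sigma>0$ and $C>0$ with $W^\sigma_k\le C\,\rho^k W^x_k$ for all $k$ (for the Beurling direction). These estimates are exactly the weight-matrix input already used for Theorem~\ref{representation}; they are contained in \cite{RainerSchindl12}, and it is precisely here that property \eqref{om1} (together with \eqref{om2} and \eqref{om3}) enters, through the scaling behaviour of $u\mapsto\vh^*(u)/u$. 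For $\rho\le1$ one may take $\sigma=x$ and $C=1$; for $\rho>1$ one enlarges the parameter $x$ appropriately (morally $\sigma\approx\rho^{1/d}x$ in the Gevrey-type model where $\vh^*(u)\sim du\log u$), absorbing the finitely many small $k$ into $C$, and the dual statement is handled symmetrically by shrinking the parameter.

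Granting these comparisons, the conclusion is routine. In the Roumieu case the inequality $\rho^k W^x_k\le C W^\sigma_k$ says that the identity map $\La^{W^x}_\rho\to\La^{W^\sigma}_1$ is continuous, so every Banach step of the $\fW$-spectrum embeds continuously into a step of the $\om$-spectrum; the two inductive spectra are therefore mutually cofinal and hence have the same (LB)-limit, giving $\La^{\{\om\}}_n=\La^{\{\fW\}}_n$ as locally convex spaces. In the Beurling case the dual inequality shows that each $\La^{W^x}_\rho$ continuously contains some $\La^{W^\sigma}_1$, so the two projective spectra are mutually coinitial and define the same Fr\'echet space, whence $\La^{(\om)}_n=\La^{(\fW)}_n$. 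The main obstacle is the displayed weight comparison, which is why I would quote it from \cite{RainerSchindl12} rather than reprove it; the remaining work is the (co)finality bookkeeping needed to pass from ``the underlying Banach families coincide up to (co)finality'' to ``the inductive, resp.\ projective, limits coincide as topological vector spaces,'' and this is the point at which one must be careful.
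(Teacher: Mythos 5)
Your proposal is correct and follows essentially the same route as the paper: the key identity $|a|^\om_\rho=|a|^{W^\rho}_1$ gives the easy inclusions, and the weight comparison $\si^k W^x_k\le C\,W^{Hx}_k$ (Lemma~5.9 of \cite{RainerSchindl12}, which also yields your ``dual'' estimate upon replacing $x$ by $x/H$) gives the mutual cofinality, resp.\ coinitiality, of the two spectra and hence the topological identification. No gaps.
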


\begin{proof}
  This follows from the proof of Theorem \ref{representation} in \cite[Theorem 5.14]{RainerSchindl12}. 
  The argument is based on the following two facts: by definition,
  \begin{equation} \label{5.00}
    |a|^\om_x = |a|^{W^x}_1, \quad a \in \C^{\N^n},
  \end{equation}
  and, by \cite[Lemma 5.9]{RainerSchindl12}, 
  \begin{equation} \label{5.10}
    \A \si>0 \E H\ge 1 \A x >0 \E C \ge 1 \A k \in \N : \si^k W^x_k \le C W^{Hx}_k.
  \end{equation}
  The (continuous) inclusions $\La^{\{\om\}}_n \subseteq \La^{\{\fW\}}_{n}$ and    
    $\La^{(\om)}_n \supseteq \La^{(\fW)}_{n}$ follow easily from \eqref{5.00}.  

  If we combine \eqref{5.00} and \eqref{5.10} we obtain
  \begin{align*} 
   \A \si>0 &\E H\ge 1 \A x >0 \E C \ge 1 : \\ 
   & |a|^\om_{Hx} \le C\, |a|^{W^x}_\si  ~\text{ and }~ |a|^{W^x}_{1/\si} \le  C\, |a|^\om_{x/H} , \quad a \in \C^{\N^n},
  \end{align*}
  which implies the continuous inclusions
     $\La^{\{W^x\}}_{n} \subseteq \La^{\{\om\}}_n$ and   
    $\La^{(\om)}_n \subseteq \La^{(W^x)}_{n}$,  
  for all $x >0$.  
\end{proof}

\section{Non-surjectivity of the Borel mapping\\ for proper quasianalytic classes}

We shall show in this section that the Borel mapping is never surjective in the proper quasianalytic setting.
We will work in the framework of ultradifferentiable classes $\cE^{[\fM]}$ defined in terms of a weight matrix $\fM$. 
In view of Theorem \ref{representation} this includes all Braun--Meise--Taylor classes and thus we recover the result 
of Bonet and Meise \cite{BonetMeise13}. 
The approach via weight matrices allows us to apply the results on Denjoy--Carleman classes in Section \ref{secDC} 
in a direct way.

Let $\fM=\{M^x\}_{x \in X}$ be a weight matrix.
Let us consider the Borel mapping 
\begin{equation} \label{Borel2}
  j^\infty : \cE^{[\fM]}_{0,n} \to \La^{[\fM]}_n, \quad f \mapsto (\p^\al f(0))_{\al \in \N^n}.
\end{equation}
The mapping \eqref{Borel2} specializes to the mapping \eqref{Borel} if $\fM$ consists of a single weight sequence $M$,
and it specializes to the mapping
\begin{equation*} 
  j^\infty : \cE^{[\om]}_{0,n} \to \La^{[\om]}_n, \quad f \mapsto (\p^\al f(0))_{\al \in \N^n},
\end{equation*}
if $\om$ is a weight function, thanks to Theorem \ref{representation} and Proposition \ref{repLa}.

\subsection{Quasianalytic weight matrices}

It is easy to see that
the ring $\cE^{\{\fM\}}_{0,n}$ is quasianalytic (i.e., the Borel mapping $j^\infty : \cE^{\{\fM\}}_{0,n} \to \La^{\{\fM\}}_n$ 
is injective) if and only if each weight sequence $M^x$ in the weight matrix $\fM=\{M^x\}_{x \in X}$
is quasianalytic. 

In the Beurling case, $\cE^{(\fM)}_{0,n}$ is quasianalytic if and only if at least one weight sequence $M^x$ in the weight matrix 
$\fM=\{M^x\}_{x \in X}$
is quasianalytic; this follows from \cite[Proposition 4.7]{Schindl15}. In that case we can assume that all weight sequences in $\fM$ are quasianalytic
by removing all non-quasianalytic ones; by the property \eqref{order} this leaves the spaces $\cE^{(\fM)}(U)$, $\cE^{(\fM)}_{0,n}$, 
and $\La^{(\fM)}_n$ unchanged.

In light of this remark we call a weight matrix $\fM=\{M^x\}_{x \in X}$ \emph{quasianalytic} if each weight sequence $M^x$ is 
quasianalytic. (We warn the reader that the formal negation of this notion, i.e., \emph{$\fM$ is not quasianalytic}, means that $\cE^{\{\fM\}}_{0,n}$ 
is non-quasianalytic,
but not necessarily $\cE^{(\fM)}_{0,n}$.)

\subsection{The Roumieu case}

We shall assume that $\fM=\{M^x\}_{x \in X}$ is a quasianalytic weight matrix such that $\cO_{0,n} \subsetneq \cE^{\{\fM\}}_{0,n}$. 
The latter condition holds if and only if $\cO_{0,n} \subsetneq \cE^{\{M^x\}}_{0,n}$ for some $x \in X$, or equivalently
\begin{equation*}
  \E x \in X : \sup_k (m_k^x)^{1/k} = \infty, 
\end{equation*}
where $m_k^x := M_k^x/k!$.

\begin{theorem} \label{matrixR}
  Let $\fM=\{M^x\}_{x \in X}$ be a quasianalytic weight matrix such that $\cO_{0,n} \subsetneq \cE^{\{\fM\}}_{0,n}$. 
  Then there exist elements in $\La^{\{\fM\}}_n$ that are not contained in $j^\infty \cE^{\{\fN\}}_{0,n}$ for any quasianalytic 
  weight matrix $\fN = \{N^y\}_{y \in Y}$.  
\end{theorem}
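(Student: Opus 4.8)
The plan is to reduce Theorem \ref{matrixR} to the single-sequence case already established in Theorem \ref{seqRoumieu}. The key point is that membership of a sequence in $j^\infty \cE^{\{\fN\}}_{0,n}$ for a weight matrix $\fN$ unwinds, by the very definition of $\cE^{\{\fN\}}$ as an inductive limit over $y \in Y$ and $\rh > 0$, to membership in $j^\infty \cE^{\{N^y\}}_{0,n}$ for some single weight sequence $N^y$ in $\fN$. Thus the condition "$a$ is not in $j^\infty \cE^{\{\fN\}}_{0,n}$ for any quasianalytic weight matrix $\fN$" is equivalent to "$a$ is not in $j^\infty \cE^{\{N\}}_{0,n}$ for any quasianalytic weight sequence $N$" — precisely the conclusion of Theorem \ref{seqRoumieu}.

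First I would fix $x \in X$ with $\cO_{0,n} \subsetneq \cE^{\{M^x\}}_{0,n}$, which exists by the standing hypothesis. Since $\fM$ is quasianalytic, $M^x$ is a quasianalytic weight sequence, so Theorem \ref{seqRoumieu} applies: there exist elements $a \in \La^{\{M^x\}}_n$ that are not contained in $j^\infty \cE^{\{N\}}_{0,n}$ for any quasianalytic weight sequence $N$. Next I would observe that $\La^{\{M^x\}}_n \subseteq \La^{\{\fM\}}_n$ — indeed this is immediate from the definition of $\La^{\{\fM\}}_n$ as the union over $x \in X$ and $\rh > 0$ of the sets where $|a|^{M^x}_\rh < \infty$ — so such an $a$ lies in $\La^{\{\fM\}}_n$.

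It then remains to check that this $a$ is not in $j^\infty \cE^{\{\fN\}}_{0,n}$ for any quasianalytic weight matrix $\fN = \{N^y\}_{y \in Y}$. Suppose to the contrary that $j^\infty f = a$ for some $f \in \cE^{\{\fN\}}_{0,n}$. Unravelling the definition, $f$ is represented on some cube $(-\tfrac1k,\tfrac1k)^n$ by a function lying in $\cE^{N^y}_\rh(K)$ for some $y \in Y$, some $\rh > 0$, and all compact $K$ in that cube; in particular its germ lies in $\cE^{\{N^y\}}_{0,n}$. Since $\fN$ is quasianalytic, $N^y$ is a quasianalytic weight sequence, and we have produced an element of $j^\infty \cE^{\{N^y\}}_{0,n}$ equal to $a$ — contradicting the choice of $a$. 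This completes the proof.

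This argument is essentially bookkeeping; I do not anticipate a genuine obstacle. The only mild point of care is the passage from the weight-matrix inductive limit to a single weight sequence: one must make sure the germ of $f$ in $\cE^{\{\fN\}}_{0,n}$ really does land in $\cE^{\{N^y\}}_{0,n}$ for a fixed $y$ (rather than only satisfying estimates with $y$ depending on the compact set $K$). This is guaranteed by the order relation \eqref{order} on $\fN$ together with the structure of the inductive limit $\on{ind}_{x>0}\on{ind}_{\rh>0}$, exactly as in the description of $\cE^{\{\fM\}}$ preceding the statement, so no new input is needed.
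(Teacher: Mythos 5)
Your proposal is correct and follows essentially the same route as the paper: fix $x$ with $\cO_{0,n} \subsetneq \cE^{\{M^x\}}_{0,n}$, apply Theorem \ref{seqRoumieu} to get a suitable $a \in \La^{\{M^x\}}_n \subseteq \La^{\{\fM\}}_n$, and unwind the inductive limit defining $\cE^{\{\fN\}}_{0,n}$ to land in a single $\cE^{\{N^y\}}_{0,n}$. The one point you flag (that $y$ must not depend on the compact set) is handled in the paper exactly as you suggest, by first restricting the representative to a fixed compact cube $[-r,r]^n$, on which the inductive limit over $y$ and $\rh$ yields a single $y$ with $f \in \cE^{\{N^y\}}([-r,r]^n)$.
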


\begin{proof}
  By assumption there exists $x \in X$ such that $\cO_{0,n} \subsetneq \cE^{\{M^x\}}_{0,n}$. Then Theorem~\ref{seqRoumieu} 
  implies that there is an element $a = (a_\al) \in \La^{\{M^x\}}_n \subseteq \La^{\{\fM\}}_n$ such that $a \not\in j^\infty \cE^{\{N\}}_{0,n}$ 
  for all quasianalytic weight sequences $N=(N_k)$.  
  
  In particular, $a \not\in j^\infty \cE^{\{\fN\}}_{0,n}$ for every quasianalytic weight matrix 
  $\fN = \{N^y\}_{y \in Y}$.  
  In fact,
  suppose that $a \in j^\infty \cE^{\{\fN\}}_{0,n}$ for some quasianalytic weight matrix $\fN$. 
  Then there exist $r>0$ and $f \in \cE^{\{\fN\}}((-2r,2r)^n)$ such that 
  $j^\infty f = a$. By restriction, we can assume that $f \in \cE^{\{\fN\}}([-r,r]^n)$ and in turn that there exists $y \in Y$ 
  such that $f \in \cE^{\{N^y\}}([-r,r]^n)$.  
  But this contradicts the first paragraph.
\end{proof}

In view of Theorem \ref{representation} and Proposition \ref{repLa} we immediately obtain the following corollary.

\begin{corollary}
  Let $\om$ be a quasianalytic weight function such that $\cO_{0,n} \subsetneq \cE^{\{\om\}}_{0,n}$. 
  Then there exist elements in $\La^{\{\om\}}_n$ that are not contained in $j^\infty \cE^{\{\si\}}_{0,n}$ for any quasianalytic 
  weight function $\si$.  
\end{corollary}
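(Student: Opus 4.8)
The plan is to reduce the corollary directly to Theorem~\ref{matrixR} using the weight-matrix description of Braun--Meise--Taylor classes. Given a quasianalytic weight function $\om$ with $\cO_{0,n} \subsetneq \cE^{\{\om\}}_{0,n}$, let $\fW = \{W^x\}_{x>0}$ be the associated weight matrix, $W^x_k = \exp(\tfrac1x \vh^*(xk))$. By the remarks following the definition of the associated weight matrix, each $W^x$ is a weight sequence, and $\om$ being quasianalytic is equivalent to each (equivalently some) $W^x$ being quasianalytic; hence $\fW$ is a quasianalytic weight matrix in the sense defined above. By Proposition~\ref{repLa} we have $\La^{\{\om\}}_n = \La^{\{\fW\}}_n$, and by Theorem~\ref{representation} we have $\cE^{\{\om\}}_{0,n} = \cE^{\{\fW\}}_{0,n}$ (taking germs of both sides of the first displayed identity in Theorem~\ref{representation}). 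In particular the Borel mapping $j^\infty$ for $\om$ is literally the Borel mapping for $\fW$.

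Next I would check that the hypothesis transfers: $\cO_{0,n} \subsetneq \cE^{\{\om\}}_{0,n}$ is, via the identification $\cE^{\{\om\}}_{0,n} = \cE^{\{\fW\}}_{0,n}$, exactly the statement $\cO_{0,n} \subsetneq \cE^{\{\fW\}}_{0,n}$, which is the running assumption of Theorem~\ref{matrixR}. (Concretely, the real analytic class fails to contain $\cE^{\{\om\}}$ iff \eqref{om2} fails for $\om$, which translates to $\sup_k (w^x_k)^{1/k} = \infty$ for some $x$, where $w^x_k = W^x_k/k!$; but one need not unwind this, since the inclusion statement is already equivalent to the matrix inclusion by the cited results.) Thus Theorem~\ref{matrixR} applies to $\fW$ and produces an element $a \in \La^{\{\fW\}}_n = \La^{\{\om\}}_n$ that lies outside $j^\infty \cE^{\{\fN\}}_{0,n}$ for every quasianalytic weight matrix $\fN$.

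Finally I would close the loop on the weight-function side: if $\si$ is any quasianalytic weight function, then its associated weight matrix $\fW_\si = \{W_\si^x\}_{x>0}$ is a quasianalytic weight matrix, and by Theorem~\ref{representation} (taking germs) $\cE^{\{\si\}}_{0,n} = \cE^{\{\fW_\si\}}_{0,n}$. Hence $j^\infty \cE^{\{\si\}}_{0,n} = j^\infty \cE^{\{\fW_\si\}}_{0,n}$ does not contain $a$. Since $\si$ was an arbitrary quasianalytic weight function, $a \in \La^{\{\om\}}_n$ is not contained in $j^\infty \cE^{\{\si\}}_{0,n}$ for any such $\si$, which is the claim.

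There is essentially no obstacle here: the corollary is a formal specialization. The only points that require a word of care are (i) making sure that the identifications in Theorem~\ref{representation} and Proposition~\ref{repLa}, which are stated at the level of function spaces and sequence spaces over open sets, descend to the germ level in a way compatible with $j^\infty$ — this is immediate since germs are an inductive limit over shrinking cubes and $j^\infty$ is defined pointwise at $0$; and (ii) the translation of $\cO_{0,n} \subsetneq \cE^{\{\om\}}_{0,n}$ into the matrix setting, which is covered by the cited equivalence between $\om \preceq \si$-type conditions and the corresponding inclusions. Neither step involves any real work beyond citing the already-established results.
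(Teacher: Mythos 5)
Your proposal is correct and follows exactly the route the paper intends: the paper deduces this corollary from Theorem~\ref{matrixR} ``in view of Theorem~\ref{representation} and Proposition~\ref{repLa},'' and your argument simply spells out that specialization (identifying $\cE^{\{\om\}}_{0,n}$ with $\cE^{\{\fW\}}_{0,n}$, transferring the hypotheses, and noting that every quasianalytic weight function $\si$ yields a quasianalytic associated weight matrix). No discrepancies to report.
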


Note that the strict inclusion $\cO_{0,n} \subsetneq \cE^{\{\om\}}_{0,n}$ holds if and only if 
\begin{equation*}
 \liminf_{t \to \infty} \frac{\om(t)}{t} = 0   
\end{equation*} 
which is immediate from the inclusion relations recalled in Section \ref{BMT}.

\subsection{The Beurling case}

Here we assume that $\fM=\{M^x\}_{x \in X}$ is a quasianalytic weight matrix such that $\cO_{0,n} \subsetneq \cE^{(\fM)}_{0,n}$. 
As we will see below this strict inclusion holds if and only if $\cO_{0,n} \subsetneq \cE^{(M^x)}_{0,n}$ for all $x \in X$, or equivalently
\begin{equation} \label{Bcond}
  \A x \in X : (m_k^x)^{1/k} \to \infty, 
\end{equation}
where $m_k^x := M_k^x/k!$.

We will reduce the Beurling to the Roumieu case. The key to this reduction is the following lemma. 

\begin{lemma} \label{LNM}
  Let $\fM=\{M^x\}_{x \in X}$ be a weight matrix. 
  Let $L=(L_k)$ be any positive sequence satisfying $L \{\lhd) \fM$. Then there exists a positive sequence $N = (N_k)$ 
  satisfying $L \lhd N \{\lhd) \fM$.
\end{lemma}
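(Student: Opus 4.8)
The plan is to produce $N$ by a diagonalization that simultaneously beats $L$ (in the $\lhd$ sense) and stays below every $M^y$ (in the $\{\lhd)$ sense). First I would unwind the hypothesis $L\{\lhd)\fM$: by definition it says that for every $y\in Y$ we have $L\lhd M^y$, i.e.\ $(L_k/M^y_k)^{1/k}\to 0$. Equivalently, for each $y$ and each $\rh>0$ there is $C_y(\rh)$ with $L_k\le C_y(\rh)\,\rh^k M^y_k$ for all $k$. The target is a sequence $N$ with $(L_k/N_k)^{1/k}\to 0$ and $(N_k/M^y_k)^{1/k}\to 0$ for every $y$; so $N$ must be ``slightly above $L$ but still far below each $M^y$.''

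The natural candidate is a geometric interpolation. Fix one index $y_0\in Y$ (if $Y$ is empty the statement is vacuous) and set $N_k := L_k^{1-\th_k} (M^{y_0}_k)^{\th_k}$ for a suitable slowly decreasing sequence $\th_k\in(0,1)$ with $\th_k\to 0$. Then $(L_k/N_k)^{1/k} = (L_k/M^{y_0}_k)^{\th_k}$, which tends to $0$ provided $\th_k\to 0$ slowly enough that $\th_k\,\log(M^{y_0}_k/L_k)/k \to\infty$ — this is possible precisely because $\log(M^{y_0}_k/L_k)/k\to\infty$, so one can choose e.g.\ $\th_k = 1/\sqrt{\log(M^{y_0}_k/L_k)/k\,}$ (truncated to lie in $(0,1)$, and with the usual care at the finitely many small $k$ where the argument under the root is not large). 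For the upper bound, $(N_k/M^{y_0}_k)^{1/k} = (L_k/M^{y_0}_k)^{1-\th_k}\to 0$ since $\th_k\to 0$ and the base tends to $0$; this handles $y_0$. For a general $y\in Y$ I would estimate $N_k/M^y_k = (L_k/M^y_k)^{1-\th_k}(M^{y_0}_k/M^y_k)^{\th_k}$, where the first factor to the power $1/k$ tends to $0$ by $L\lhd M^y$, and the second is controlled: by \eqref{order} either $M^{y_0}_k\le M^y_k$ (if $y_0\le y$, so that factor is $\le 1$) or one uses $L\lhd M^{y_0}$ together with $L\lhd M^y$ to bound $M^{y_0}_k/M^y_k$ crudely — but in fact the cleanest route is to write $N_k/M^y_k \le \max\{L_k/M^y_k,\ M^{y_0}_k/M^y_k\}^{?}$ type estimates; more robustly, replace the ansatz by $N_k := \min\{ L_k\,\si_k^{\,k},\ (L_k)^{1-\th_k}(M^{y_0}_k)^{\th_k}\}$ where $\si_k\to\infty$ is chosen, via a standard diagonal argument against the countable-or-not family, so that $N_k\le \ep_k^{\,k} M^y_k$ for each fixed $y$ eventually. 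Since $X,Y$ need not be countable, the correct and safe move is: it suffices to beat $M^{y_0}$ for a single $y_0$, because $L\{\lhd)\fM$ already gives $L\lhd M^y$ for all $y$, and as long as $L\le N\le$ something $\lhd M^{y_0}$ with $N$ also $\lhd M^{y_0}$, the chain $N\lhd M^{y_0}$ plus the ordering does NOT immediately give $N\lhd M^y$ for $y<y_0$ — so one genuinely must argue $N\lhd M^y$ directly from $L\lhd M^y$, using only that $N_k$ is squeezed between $L_k$ and $L_k\cdot(\text{subexponential})$.

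Here is the point that makes this work and that I expect to be the main obstacle to state cleanly: the condition $L\lhd M^y$ is equivalent to $L_k\le C_y(\rh)\rh^k M^y_k$ for all $\rh$, and it is stable under multiplying $L_k$ by any sequence $a_k$ with $a_k^{1/k}\to 1$ — indeed if $a_k^{1/k}\to 1$ then $(a_k L_k/M^y_k)^{1/k} = a_k^{1/k}\cdot(L_k/M^y_k)^{1/k}\to 1\cdot 0 = 0$. So the whole problem reduces to: find $a_k$ with $a_k^{1/k}\to 1$ but $a_k^{1/k}/(L_k/N_k')^{1/k}\cdots$ — more precisely, find $N_k = a_k L_k$ with $a_k^{1/k}\to\infty$ is impossible while keeping $N\lhd M^y$; so instead we want $(N_k/L_k)^{1/k}\to\infty$, which forces $a_k^{1/k}\to\infty$, contradicting stability — hence one cannot take $N_k = a_k L_k$ with $a_k^{1/k}\to 1$. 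Thus the interpolation exponent genuinely matters: we need $(N_k/L_k)^{1/k}\to\infty$ AND $(N_k/M^y_k)^{1/k}\to 0$, so $N$ must grow superexponentially faster than $L$ yet subexponentially slower than each $M^y$; the gap $(M^y_k/L_k)^{1/k}\to\infty$ is exactly what provides room. The diagonal choice $\th_k = o(1)$ with $\th_k\cdot\frac{1}{k}\log(M^{y_0}_k/L_k)\to\infty$ realizes this, and I would verify:

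\begin{enumerate}
  \item $(N_k/L_k)^{1/k} = (M^{y_0}_k/L_k)^{\th_k} = \exp\!\big(\th_k\cdot\tfrac1k\log(M^{y_0}_k/L_k)\cdot k/k\big)^{?}$ — careful, $(M^{y_0}_k/L_k)^{\th_k/k}$ is what appears, so we need $\th_k\cdot\tfrac1k\log(M^{y_0}_k/L_k)\to\infty$, giving $N\rhd L$, i.e.\ $L\lhd N$;
  \item $(N_k/M^y_k)^{1/k} = (L_k/M^y_k)^{(1-\th_k)/1}\big)^{1/k}\cdot(M^{y_0}_k/M^y_k)^{\th_k/k}$; the first factor $\to 0$ by $L\lhd M^y$ since $1-\th_k\to 1$, and the second is bounded (being $\le 1$ when $y\ge y_0$, and handled by an extra crude bound otherwise, or simply by noting $M^{y_0}_k\le\max(M^{y_0}_k,M^y_k)$ and $\th_k\to 0$) — so $N\lhd M^y$ for every $y$, i.e.\ $N\{\lhd)\fM$.
\end{enumerate}

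Finally I would note the formula for $\th_k$ needs $\log(M^{y_0}_k/L_k)>0$, which holds for $k$ large since $(M^{y_0}_k/L_k)^{1/k}\to\infty$; for the remaining finitely many $k$ set $N_k:=L_k$, or $N_k := \max(L_k, \text{small})$, which affects neither asymptotic condition. The main obstacle is purely bookkeeping — handling the general index $y$ (not just $y_0$) and the initial indices — rather than anything conceptual; the essential mechanism is the elementary observation that a geometric mean with vanishing weight on the large factor lands strictly between the two.
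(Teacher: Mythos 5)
Your guiding idea --- take a geometric mean of $L$ with something above it so that $N$ lands strictly between $L$ and every $M^y$ on the $(\cdot)^{1/k}$ scale --- is the same mechanism the paper uses, but the construction you actually commit to, $N_k = L_k^{1-\th_k}(M^{y_0}_k)^{\th_k}$ with a \emph{single fixed} $y_0$ and with $\th_k$ constrained only by $\th_k\to0$ and $\th_k\,\tfrac1k\log(M^{y_0}_k/L_k)\to\infty$, does not prove the lemma. Set $v_k:=\tfrac1k\log(M^{y_0}_k/L_k)$ and, for another index $y$, $u_k:=\tfrac1k\log(M^{y}_k/L_k)$; then $\tfrac1k\log(N_k/L_k)=\th_k v_k$ and $\tfrac1k\log(N_k/M^{y}_k)=\th_k v_k-u_k$. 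So $L\lhd N$ forces $\th_k v_k\to\infty$, while $N\lhd M^{y}$ demands $\th_k v_k-u_k\to-\infty$, i.e.\ the gain $\th_k v_k$ must be swallowed by $u_k$ for \emph{every} $y$ --- yet your constraints on $\th_k$ involve only $y_0$. Concretely (ignoring the normalization of finitely many initial terms), take $L_k=k!$, $M^{y_0}_k=k!\,k^k$ and $M^{y}_k=k!\,(\log(k+e))^k$ with $y<y_0$, so that $L\lhd M^{y}\le M^{y_0}$; your choice $\th_k=1/\sqrt{v_k}=1/\sqrt{\log k}$ gives $\th_k v_k-u_k=\sqrt{\log k}-\log\log(k+e)\to+\infty$, so in fact $M^{y}\lhd N$ rather than $N\lhd M^{y}$. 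The parenthetical ``handled by an extra crude bound otherwise, or simply by noting $M^{y_0}_k\le\max(M^{y_0}_k,M^y_k)$ and $\th_k\to0$'' is not an argument: $(M^{y_0}_k/M^{y}_k)^{\th_k/k}$ can blow up no matter how fast you let $\th_k\to0$ compatibly with $\th_kv_k\to\infty$. Since in the intended applications $X=(0,\infty)$ has no minimal element, no choice of $y_0$ avoids this.

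The missing step is a diagonalization over the matrix itself. By the monotonicity \eqref{order} it suffices to secure $N\lhd M^x$ for a countable coinitial family, say $M^{1/p}$, $p\in\N$, and the interpolation partner must descend through this family as $k$ grows. The paper fixes the exponent at $1/2$ and lets the partner vary: from $L_k\le C_p\,p^{-k}M^{1/p}_k$ it picks block boundaries $j_p$ with $C_p\le A^{j_p}$ and sets $N_j=\sqrt{L_jM^{1/p}_j}$ for $j_p\le j<j_{p+1}$; then $(L_j/N_j)^{1/j}\le\sqrt{A/p}\to0$, and for fixed $x$ and $p\ge1/x$ one has $M^{1/p}_j\le M^x_j$, whence $(N_j/M^x_j)^{1/j}\le(L_j/M^x_j)^{1/(2j)}\to0$. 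Your alternative ansatz $N_k=L_k\si_k^{\,k}$ with $\si_k\to\infty$ chosen ``via a standard diagonal argument'' is essentially this construction and would close the gap, but you never carry it out; as written, the proposal is incomplete at precisely the point where the real work lies.
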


\begin{proof}
  Without loss of generality we can assume that $X=(0,\infty)$. The assumption $L \{\lhd) \fM$  precisely means that
  \begin{equation*}
    \A x \in (0,\infty) \A \rh >0 \E C \ge 1 \A k \in \N : 
    L_k \le C\, \rh^k M^x_k.
  \end{equation*}
  In particular, (taking $x=\rh=1/p$)
  \begin{equation} \label{diagonal}
    \A p \in \N_{\ge1} \E C \ge 1 \A k \in \N : 
    L_k \le C \,p^{-k} M^{1/p}_k.
  \end{equation}
  Let $C_p$ denote the minimal constant $C$ such that \eqref{diagonal} holds. This defines a non-decreasing sequence $(C_p)_p$ (by \eqref{order}). 
  Fix a real number $A > 1$. 
  Choose a strictly increasing sequence $(j_p)_{p\ge 1}$ of positive integers such that $C_p \le A^{j_p}$.

  We define 
  \begin{equation*}
    N_j := 
      \sqrt{L_j  M^{1/p}_j} \quad  \text{ for }\quad j_p \le j < j_{p+1};    
  \end{equation*}
  for $0 \le j < j_1$ any choice of $N_j$ works. 
  Then $L \lhd N$ since, by \eqref{diagonal}, for $j_p \le j < j_{p+1}$, 
  \begin{align*}
    \Big(\frac{L_j}{N_j}\Big)^{1/j} = \Big(\sqrt{\frac{L_j}{M^{1/p}_j}}\Big)^{1/j} \le \sqrt{\frac{C_p^{1/j}}{p}} \le \sqrt{\frac{A}{p}}  
  \end{align*}
  which tends to 0 as $j \to \infty$. Let $x >0$ be fixed. If $j_p \le j < j_{p+1}$ where $p \ge 1/x$, then by \eqref{order}, 
  \begin{align*}
    \Big(\frac{N_j}{M^x_j}\Big)^{1/j} = \Big(\frac{\sqrt{L_j  M^{1/p}_j}}{M^x_j}\Big)^{1/j} \le \Big(\sqrt{\frac{L_j}{M^x_j}}\Big)^{1/j} 
  \end{align*}
  which tends to 0 as $j \to \infty$ because $L \lhd M^x$. That is $N \{\lhd) \fM$ and the proof is complete.
\end{proof}

\begin{corollary} \label{corLNM}
  If $\fM=\{M^x\}_{x \in X}$ is a weight matrix satisfying \eqref{Bcond}, then there exists a positive sequence $N=(N_k) = (k!\, n_k)$ 
  satisfying $n_k^{1/k} \to 0$ 
  and $N \{\lhd) \fM$.
\end{corollary}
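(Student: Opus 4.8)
The statement to prove is Corollary~\ref{corLNM}: given a weight matrix $\fM = \{M^x\}_{x\in X}$ satisfying \eqref{Bcond}, produce a positive sequence $N = (k!\,n_k)$ with $n_k^{1/k}\to 0$ and $N \{\lhd) \fM$. The obvious strategy is to apply Lemma~\ref{LNM} to a carefully chosen $L$. Indeed, Lemma~\ref{LNM} says that from any $L$ with $L \{\lhd) \fM$ we can manufacture an intermediate $N$ with $L \lhd N \{\lhd) \fM$; so I just need to feed it an $L$ for which $L \lhd N$ \emph{forces} the extra conclusion $n_k^{1/k}\to 0$, where $n_k = N_k/k!$.

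\textbf{Choice of $L$.} Take $L_k := k!$, the real-analytic weight sequence. Condition \eqref{Bcond} says exactly that $(m^x_k)^{1/k} = (M^x_k/k!)^{1/k}\to\infty$ for every $x\in X$, which is precisely the statement $k! \lhd M^x$ for every $x$, i.e.\ $L \{\lhd) \fM$. (Recall the equivalence $M \lhd N \Leftrightarrow (M_k/N_k)^{1/k}\to 0$ from Section~\ref{weights}.) So Lemma~\ref{LNM} applies and yields a positive sequence $N=(N_k)$ with $k! \lhd N \{\lhd) \fM$. Now $k! \lhd N$ means $(k!/N_k)^{1/k}\to 0$, equivalently $(N_k/k!)^{1/k}\to\infty$ — but that's the \emph{opposite} of what I want. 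So $L = (k!)$ is the wrong input.

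\textbf{Corrected choice.} The conclusion I want is $n_k^{1/k} = (N_k/k!)^{1/k}\to 0$, i.e.\ $N \lhd (k!)$ as well as $N \{\lhd) \fM$. So I should instead apply Lemma~\ref{LNM} not to produce something \emph{above} a chosen $L$ but realize that I need $N$ squeezed \emph{below} $k!$ and below all $M^x$. The clean route: apply Lemma~\ref{LNM} with the matrix $\fM$ \emph{augmented} (or rather intersected) so that the real-analytic sequence plays the role of an upper constraint. Concretely, set $L := (k!)$ and note $L \{\lhd) \fM$ by \eqref{Bcond}; Lemma~\ref{LNM} gives $N'$ with $(k!) \lhd N' \{\lhd) \fM$. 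Then apply Lemma~\ref{LNM} \emph{again}, this time to the one-element weight matrix $\{N'\}$ extended by... no. The truly economical argument: redo the proof of Lemma~\ref{LNM} verbatim but with the geometric-mean construction $N_j := \sqrt{L_j M^{1/p}_j}$ replaced by a \emph{three-fold} geometric mean or, more simply, observe that the very sequence $N_j := \sqrt{j!\cdot M^{1/p}_j}$ (for $j_p\le j<j_{p+1}$, with the $j_p$ chosen as in Lemma~\ref{LNM} to absorb the constants $C_p$) simultaneously satisfies $N \{\lhd) \fM$ and $N \lhd (k!)$: the first because $(N_j/M^x_j)^{1/j}\le (j!/M^x_j)^{1/(2j)}\to 0$ using $k!\lhd M^x$, and the second because $(N_j/j!)^{1/j} = (M^{1/p}_j/j!)^{1/(2j)}\cdot$(bounded) and one then uses \eqref{diagonal} with $L=(k!)$ to control $M^{1/p}_j/j!$ from above by $C_p p^j$, giving $(N_j/j!)^{1/j}\le \sqrt{C_p^{1/j}p}\le\sqrt{A p}$ — which does \emph{not} tend to $0$.

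\textbf{Where the real difficulty lies, and the fix.} The tension above is genuine: $n_k^{1/k}\to 0$ and $N\{\lhd)\fM$ pull $N$ in compatible directions only because \eqref{Bcond} forces $k!$ to be $\lhd$-dominated by every $M^x$, so there is ample room strictly between $(k!)$ and the matrix. The correct construction is: take $L := (k!)$, which satisfies $L\{\lhd)\fM$; apply Lemma~\ref{LNM} to get $N^{(1)}$ with $(k!)\lhd N^{(1)}\{\lhd)\fM$; this $N^{(1)}$ is itself (after passing to its log-convex minorant if needed) a weight sequence with $(k!)\lhd N^{(1)}$, hence $\{N^{(1)}\}$ is a one-element quasianalytic-type matrix with $(k!)\{\lhd)\{N^{(1)}\}$... this still gives $N$ above $k!$. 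The resolution is that Corollary~\ref{corLNM} as I am reading it must intend the \emph{reverse} inequality direction $n_k^{1/k}\to 0$ to be read against a differently-normalized matrix — so the honest plan is: \emph{apply Lemma~\ref{LNM} once, with $L=(k!)$ and with $\fM$ replaced by the matrix $\{M^x/k!\}$ viewed multiplicatively}, producing $N = (k!\,n_k)$ where $n_k$ is the Lemma's output applied to the shifted sequences; then $n_k^{1/k}\to 0$ is the Lemma's ``$L\lhd N$'' clause and $N\{\lhd)\fM$ is its ``$N\{\lhd)\fM$'' clause. The main obstacle is bookkeeping the $k!$ normalization consistently through \eqref{diagonal} and the choice of $j_p$; once that is set up, the two limit computations are a line each, exactly as in Lemma~\ref{LNM}. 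I would present it as a two-line deduction from Lemma~\ref{LNM} after the correct substitution, flagging the normalization as the only subtlety.
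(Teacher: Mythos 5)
Your first paragraph (``Choice of $L$'') \emph{is} the paper's proof: take $L_k:=k!$, observe that \eqref{Bcond} is exactly the statement $L \{\lhd) \fM$, and apply Lemma~\ref{LNM} to obtain $N$ with $(k!) \lhd N \{\lhd) \fM$, that is $n_k^{1/k}\to\infty$ and $N \{\lhd) \fM$. The conclusion ``$n_k^{1/k}\to 0$'' printed in Corollary~\ref{corLNM} is a typo for ``$n_k^{1/k}\to\infty$''. You can see this from how the corollary is used: immediately afterwards the paper deduces $\cO_{0,n}\subsetneq\cE^{(N)}_{0,n}\subsetneq\cE^{(\fM)}_{0,n}$, and by \eqref{M4} the first strict inclusion requires $n_k^{1/k}\to\infty$; likewise the proof of Proposition~\ref{Beurlingrepresentation}, which is the real consumer of Lemma~\ref{LNM}, spells out ``$(k!)_k\lhd N$, that is $n_k^{1/k}\to\infty$''. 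Note also that the statement read literally with $\to 0$ would be trivially true and useless: since $\lhd$ is transitive, \eqref{Bcond} gives $(k!)\{\lhd)\fM$, so already $N_k:=1$ satisfies $n_k^{1/k}\to 0$ and $N\{\lhd)\fM$, and no construction would be needed.

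The genuine defect in your proposal is therefore not a missing idea but the decision to abandon a correct argument in order to chase the misprinted target. The second and third paragraphs do not converge to a proof of anything: the ``three-fold geometric mean'' is never defined; your computation correctly shows that $(N_j/j!)^{1/j}$ is \emph{not} forced to $0$ (as it cannot be, compatibly with $(k!)\lhd N$); and the final ``resolution'' via ``the matrix $\{M^x/k!\}$ viewed multiplicatively'' is an unexecuted plan with no precise statement or verification. Had you trusted your own computation in the first paragraph and cross-checked against the sentence following the corollary (or against Proposition~\ref{Beurlingrepresentation}), you would have identified the typo and stopped after five lines, which is exactly what the paper does.
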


\begin{proof}
  If $L$ denotes the sequence $L_k=k!$, then \eqref{Bcond} means exactly $L \{\lhd) \fM$ and the above lemma implies the 
  assertion.
\end{proof}

Thus, if a weight matrix $\fM=\{M^x\}_{x \in X}$ satisfies \eqref{Bcond}, then there is a positive sequence $N=(N_k)$ such that  
$\cO_{0,n} \subsetneq \cE^{(N)}_{0,n} \subsetneq \cE^{(\fM)}_{0,n}$, and the assertion at the 
beginning of the section is proved.

\begin{proposition} \label{Beurlingrepresentation}
  If $\fM=\{M^x\}_{x \in X}$ is a weight matrix satisfying \eqref{Bcond}, then 
  \begin{align} \label{Brep1}
    \La^{(\fM)}_n = \bigcup \big\{\La^{\{L\}}_n : L \{\lhd) \fM,~ \ell_k^{1/k} \to \infty\big\}. 
  \end{align}
\end{proposition}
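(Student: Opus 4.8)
The plan is to prove the two inclusions in \eqref{Brep1} separately, reducing everything to one dimension by passing to the radial sequence: for $a = (a_\al) \in \C^{\N^n}$ set $b_k := \max_{|\al| = k} |a_\al|$, so that $|a|^P_\rh = \sup_k b_k/(\rh^k P_k)$ for every positive sequence $P$. Hence membership of $a$ in any of the spaces $\La^{[\cdot]}_n$ depends only on $b = (b_k)$, and all estimates become one-dimensional.

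The inclusion $\supseteq$ is the easy one, and is also an instance of \cite[Proposition 4.6]{RainerSchindl12} applied to the one-element matrix $\{L\}$; I would give the direct argument anyway. Let $L$ be a positive sequence with $L \{\lhd) \fM$ (the side condition $\ell_k^{1/k} \to \infty$ will not be needed here), and let $a \in \La^{\{L\}}_n$, say $|a|^L_\rh < \infty$. For arbitrary $x \in X$ and $\si > 0$, apply $L \lhd M^x$ with the constant $\si/\rh$ to get $C \ge 1$ with $L_k \le C(\si/\rh)^k M^x_k$ for all $k$; combining this with $b_k \le |a|^L_\rh\, \rh^k L_k$ yields $|a|^{M^x}_\si \le C\,|a|^L_\rh < \infty$. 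Since $x$ and $\si$ are arbitrary, $a \in \La^{(\fM)}_n$.

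For the inclusion $\subseteq$, take $a \in \La^{(\fM)}_n$. Unwinding the definition, for every $x \in X$ and $\rh > 0$ one has $b_k \le |a|^{M^x}_\rh\, \rh^k M^x_k$ for all $k$, i.e.\ $b \{\lhd) \fM$. To produce a single admissible sequence $L$, I would apply Lemma \ref{LNM} with $L = (k!)$ --- legitimate since \eqref{Bcond} says exactly $(k!) \{\lhd) \fM$ --- obtaining a positive sequence $N$ with $(k!) \lhd N \{\lhd) \fM$, and in particular $(N_k/k!)^{1/k} \to \infty$ (this is Corollary \ref{corLNM}). Then set $L_k := \max\{b_k, N_k\}$. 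This $L$ is positive because $N_k > 0$; it satisfies $L \{\lhd) \fM$, since for fixed $x$ and $\rh$ one may take the larger of the two constants coming from $b \{\lhd) \fM$ and $N \{\lhd) \fM$; it satisfies $\ell_k = L_k/k! \ge N_k/k!$, so $\ell_k^{1/k} \to \infty$; and $b_k \le L_k$ for all $k$ gives $|a|^L_1 = \sup_k b_k/L_k \le 1 < \infty$, so $a \in \La^{\{L\}}_n$. Thus $a$ lies in the union on the right-hand side of \eqref{Brep1}, which completes the proof.

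I do not expect a genuine obstacle here: the substantive construction --- a sequence that is simultaneously $\{\lhd) \fM$ and strictly faster-growing than $(k!)$ --- has already been carried out in Lemma \ref{LNM}/Corollary \ref{corLNM}. The single point requiring care is that the radial sequence $b$ of $a$ may vanish at some (or all) indices, so neither $L = b$ nor a geometric mean of $b$ with $N$ will do; forming the pointwise maximum with the strictly positive, fast-growing $N$ cures this while leaving both $L \{\lhd) \fM$ and the estimate $|a|^L_1 \le 1$ intact. The same estimates make all the inclusions continuous, so \eqref{Brep1} in fact holds as an equality of locally convex spaces, in analogy with Proposition \ref{repLa}.
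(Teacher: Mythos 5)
Your proof is correct and follows essentially the same route as the paper: both hinge on Lemma~\ref{LNM}, the paper forming $L_k=\max\{\max_{|\al|=k}|a_\al|,\,k!\}$ first and then applying the lemma to this $a$-dependent sequence, whereas you apply it once to $(k!)$ (via Corollary~\ref{corLNM}) and take the pointwise maximum with the radial sequence afterwards. The only thing the reordering costs you is that the paper's variant yields $a\in\La^{(N)}_n$ for the constructed $N$, which is what gives the Beurling refinement \eqref{Brep2} for free, while your witness $L$ only delivers Roumieu membership; for the proposition as stated both arguments are equally valid.
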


\begin{proof}
  Let us show the nontrivial inclusion $\subseteq$. Let $a = (a_\al) \in \La^{(\fM)}_n$ and set 
  $L_k := \max\{\max_{|\al|=k} |a_\al|, k!\}$. Then $L \{\lhd) \fM$ as $a \in \La^{(\fM)}_n$ and by \eqref{Bcond}. 
  Lemma~\ref{LNM} provides a positive sequence $N=(N_k)$ such that $L \lhd N \{\lhd) \fM$, and thus $a \in \La^{(N)}_n \subseteq \La^{\{N\}}_n$. 
  In particular, $(k!)_k \lhd N$, that is $n_k^{1/k} \to \infty$. The proof is complete. 
\end{proof}

\begin{remark}
  (1) The proof of Proposition \ref{Beurlingrepresentation} actually shows that
  \begin{align} \label{Brep2}
    \La^{(\fM)}_n = \bigcup \big\{\La^{(L)}_n : L \{\lhd) \fM,~ \ell_k^{1/k} \to \infty\big\}. 
  \end{align}
  
  (2) It is easy to see that \eqref{Brep1} and \eqref{Brep2} hold with $\La^{(\fM)}_n$ and $\La^{[L]}_n$ replaced by 
  $\cE^{(\fM)}(K)$ and $\cE^{[L]}(K)$ for any compact $K \subseteq \R^n$, where
  \begin{align*}
    \cE^{\{\fM\}}(K) &:= \on{ind}_{x > 0} \on{ind}_{\rh>0} \cE^{M^x}_\rh(K), \\
    \cE^{(\fM)}(K) &:= \on{proj}_{x > 0} \on{proj}_{\rh>0} \cE^{M^x}_\rh(K).
  \end{align*}
  
  (3) In the situation of Lemma \ref{LNM} it is sometimes possible to transfer properties of $\fM$ to $N$,   
  just as in Corollary \ref{corLNM}. Another instance is the following: if $L \{\lhd) \fM$, where $L$ satisfies \eqref{M3} and  
  $\cE^{(\fM)}$ is non-quasianalytic, then there is a log-convex non-quasianalytic $N$ satisfying $L \lhd N \{\lhd) \fM$.

  (4) In analogy to the inductive representations in \eqref{Brep1} and \eqref{Brep2}, there are 
  projective representations of the form 
  \begin{align*}
       \La^{\{\fM\}}_n = \bigcap \La^{(L)}_n  = \bigcap \La^{\{L\}}_n, 
  \end{align*} 
  where the intersections are taken over all weight sequences $L$ with $\fM \{\lhd) L$; similarly, for $\cE^{\{\fM\}}(U)$ and 
  $\cE^{[L]}(U)$ and any open $U \subseteq \R^n$. See \cite[Proposition 9.4.4]{Schindl14}.
\end{remark}

Now we are ready to show our main result in the Beurling case.

\begin{theorem} \label{matrixB}
  Let $\fM=\{M^x\}_{x \in X}$ be a quasianalytic weight matrix such that $\cO_{0,n} \subsetneq \cE^{(\fM)}_{0,n}$. 
  Then there exist elements in $\La^{(\fM)}_n$ that are not contained in $j^\infty \cE^{\{\fN\}}_{0,n}$ for any quasianalytic 
  weight matrix $\fN = \{N^y\}_{y \in Y}$. 
\end{theorem}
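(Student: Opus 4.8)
The plan is to mimic the Roumieu argument of Theorem \ref{matrixR}, but now feeding it the Beurling-to-Roumieu reduction machinery developed in Lemma \ref{LNM} and Proposition \ref{Beurlingrepresentation}. First I would invoke the hypothesis $\cO_{0,n} \subsetneq \cE^{(\fM)}_{0,n}$, which by the discussion preceding Proposition \ref{Beurlingrepresentation} is equivalent to condition \eqref{Bcond}: $(m^x_k)^{1/k} \to \infty$ for every $x \in X$. Corollary \ref{corLNM} (or equivalently Lemma \ref{LNM} applied to $L_k = k!$) then produces a positive sequence $N = (N_k) = (k!\, n_k)$ with $n_k^{1/k} \to 0$ and $N \{\lhd) \fM$. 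As in the proof of the non-matrix Beurling theorem, I would replace $N$ by its log-convex minorant $\underline N$: by \cite[Lemma 2.6 and Theorem 2.15]{RainerSchindl12} one still has $\underline N \{\lhd) \fM$ and $\underline n_k^{1/k} \to \infty$ (i.e.\ $\cO_{0,n} \subsetneq \cE^{\{\underline N\}}_{0,n}$), and $\underline N$ is a weight sequence which is quasianalytic because each $M^x$ is quasianalytic and $\underline N \lhd M^x$ (so $\underline N \preceq M^x$, whence $\cE^{\{\underline N\}} \subseteq \cE^{\{M^x\}}$ and injectivity of the Borel map passes down — or one argues directly with the $\mu$-series).

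Next I would apply Theorem \ref{seqRoumieu} to the quasianalytic weight sequence $\underline N$: it yields an element $a = (a_\al) \in \La^{\{\underline N\}}_n$ that lies in $j^\infty \cE^{\{K\}}_{0,n}$ for \emph{no} quasianalytic weight sequence $K = (K_k)$. Concretely one takes $a$ positive and not defining a real analytic germ. The point is now to locate $a$ inside $\La^{(\fM)}_n$: since $\underline N \{\lhd) \fM$ we have the inclusion $\La^{\{\underline N\}}_n \subseteq \La^{(\fM)}_n$ — this is exactly the inclusion recorded after the definition of $\fM \{\lhd) \fN$, namely $\fM \{\lhd) \fN$ implies $\La^{\{\fM\}} \subseteq \La^{(\fN)}$ applied with the single-sequence matrix $\{\underline N\}$ in the role of $\fM$ — or one simply reads it off Proposition \ref{Beurlingrepresentation}. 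Hence $a \in \La^{(\fM)}_n$.

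It remains to show $a \notin j^\infty \cE^{\{\fN\}}_{0,n}$ for any quasianalytic weight matrix $\fN = \{N^y\}_{y\in Y}$, and here I would copy the final paragraph of the proof of Theorem \ref{matrixR} verbatim: if $a = j^\infty f$ for some $f \in \cE^{\{\fN\}}((-2r,2r)^n)$, then by restriction $f \in \cE^{\{\fN\}}([-r,r]^n)$, and since this is an inductive limit over $y \in Y$ (and over $\rh$), $f$ already lies in $\cE^{\{N^y\}}([-r,r]^n)$ for some fixed $y$; thus $f$ represents a germ in $\cE^{\{N^y\}}_{0,n}$ with $j^\infty f = a$, contradicting the choice of $a$ via Theorem \ref{seqRoumieu} (note $N^y$ is a quasianalytic weight sequence because $\fN$ is a quasianalytic weight matrix). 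The corollary that $a \notin j^\infty \cE^{(\fN)}_{0,n}$ for any quasianalytic $\fN$ is then immediate from $\cE^{(\fN)} \subseteq \cE^{\{\fN\}}$.

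I expect the only genuinely delicate point to be the passage to the log-convex minorant: one must be sure that $\underline N$ remains quasianalytic and still satisfies $\underline N \{\lhd) \fM$ as well as $\underline n_k^{1/k}\to\infty$, all of which are handled by the cited lemmas of \cite{RainerSchindl12} exactly as in the non-matrix Beurling theorem; alternatively, one can sidestep the minorant entirely by using Proposition \ref{Beurlingrepresentation} directly, which already delivers a sequence with the required properties ($L \lhd N \{\lhd) \fM$ with $N$ positive), and then only needs the minorant to make $N$ a bona fide weight sequence so that Theorem \ref{seqRoumieu} applies. Everything else is a formal transcription of the Roumieu proof.
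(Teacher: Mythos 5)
Your proof is correct and follows essentially the same route as the paper's own argument: reduce to the Roumieu case by producing, via Lemma \ref{LNM} (equivalently Corollary \ref{corLNM} or Proposition \ref{Beurlingrepresentation}), a sequence $N$ with $k!\lhd N \{\lhd)\, \fM$, pass to its log-convex minorant to obtain a quasianalytic weight sequence strictly containing $\cO_{0,n}$, and then apply Theorem \ref{seqRoumieu} together with the inductive-limit restriction argument from Theorem \ref{matrixR}. The only blemish is the condition $n_k^{1/k}\to 0$ you quote from Corollary \ref{corLNM}, which is a typo in that corollary's statement and should read $n_k^{1/k}\to\infty$ (as forced by $k!\lhd N$); you use the correct version in the remainder of your argument.
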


In particular, 
there are elements in $\La^{(\fM)}_n$ not contained in $j^\infty \cE^{(\fN)}_{0,n}$ for any quasianalytic 
weight matrix $\fN = \{N^y\}_{y \in Y}$,  
since always $\cE^{(\fN)} \subseteq \cE^{\{\fN\}}$.

\begin{proof}
  Let $L=(L_k)$ be a positive sequence satisfying $L \{\lhd) \fM$ and $\ell_k^{1/k} \to \infty$ which exists by Proposition 
  \ref{Beurlingrepresentation}. Let $\underline L=(\underline L_k)$ denote the log-convex minorant of $L$. Then still
  $\underline L \{\lhd) \fM$ and $\underline \ell_k^{1/k} \to \infty$, by \cite[Theorem 2.15]{RainerSchindl12}. It follows 
  that $\underline L$ is a quasianalytic weight sequence, since $\cE^{(\fM)}$ is quasianalytic. 
  Thanks to $\underline \ell_k^{1/k} \to \infty$ we have $\cO_{0,n} \subsetneq \cE^{\{\underline L\}}_{0,n}$. 

  Now the assertion is a direct consequence of 
  Theorem \ref{seqRoumieu} (or Theorem \ref{matrixR}) applied to $\underline L$.
\end{proof}

By Theorem \ref{representation} and Proposition \ref{repLa}, we immediately get the following corollary.
 
\begin{corollary}
  Let $\om$ be a quasianalytic weight function such that $\cO_{0,n} \subsetneq \cE^{(\om)}_{0,n}$. 
  Then there exist elements in $\La^{(\om)}_n$ that are not contained in $j^\infty \cE^{\{\si\}}_{0,n}$ for any quasianalytic 
  weight function $\si$. 
\end{corollary}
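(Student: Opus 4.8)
The plan is to obtain this as a direct transfer of Theorem~\ref{matrixB} through the weight matrix $\fW=\{W^x\}_{x>0}$ associated with $\om$, exactly as the analogous Roumieu corollary follows from Theorem~\ref{matrixR}. First I would record that $\fW$ is a \emph{quasianalytic} weight matrix: by the properties of $\vh^*$ recalled in Section~\ref{weightfunction} each $W^x$ is a weight sequence, and since $\om$ is quasianalytic, each $W^x$ is quasianalytic by \cite[Corollary~5.8]{RainerSchindl12}. Next I would check that the hypothesis transfers: by Theorem~\ref{representation} the germ rings satisfy $\cE^{(\om)}_{0,n}=\cE^{(\fW)}_{0,n}$, so $\cO_{0,n}\subsetneq\cE^{(\om)}_{0,n}$ is the same as $\cO_{0,n}\subsetneq\cE^{(\fW)}_{0,n}$, which by the discussion preceding Theorem~\ref{matrixB} is precisely condition \eqref{Bcond} for $\fW$; alternatively one reads this off from \eqref{5.00} and \eqref{om5}.

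Having set this up, I would apply Theorem~\ref{matrixB} to $\fW$, using Proposition~\ref{repLa} to identify $\La^{(\om)}_n=\La^{(\fW)}_n$: this yields an element $a\in\La^{(\om)}_n$ lying outside $j^\infty\cE^{\{\fN\}}_{0,n}$ for \emph{every} quasianalytic weight matrix $\fN$. Finally I would unwind what this says for classes defined by weight functions: given a quasianalytic weight function $\si$, let $\fS=\{S^x\}_{x>0}$ be its associated weight matrix, which is quasianalytic by the same argument as for $\fW$; by Theorem~\ref{representation} we have $\cE^{\{\si\}}(U)=\cE^{\{\fS\}}(U)$ for every open $U\subseteq\R^n$, hence $\cE^{\{\si\}}_{0,n}=\cE^{\{\fS\}}_{0,n}$ as rings of germs, and therefore $a\notin j^\infty\cE^{\{\si\}}_{0,n}$, which is exactly the claim.

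I do not expect a genuine obstacle here: the statement is a formal consequence of Theorem~\ref{matrixB} together with the dictionary between weight functions and their associated weight matrices (Theorem~\ref{representation}, Proposition~\ref{repLa}). The only point that needs a modicum of care is the bookkeeping in translating the strict inclusion $\cO_{0,n}\subsetneq\cE^{(\om)}_{0,n}$ into the weight-matrix condition \eqref{Bcond}, and in matching the Roumieu germ ring $\cE^{\{\si\}}_{0,n}$ with $\cE^{\{\fS\}}_{0,n}$; both are routine given Theorem~\ref{representation}, Proposition~\ref{repLa}, and the inclusion relations recalled in Section~\ref{BMT}.
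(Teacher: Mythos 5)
Your proposal is correct and follows exactly the paper's route: the paper derives this corollary immediately from Theorem~\ref{matrixB} via Theorem~\ref{representation} and Proposition~\ref{repLa}, which is precisely the transfer through the associated weight matrices that you carry out. You merely make explicit the bookkeeping (quasianalyticity of $\fW$, translation of the strict inclusion, identification of the germ rings for $\si$) that the paper leaves implicit.
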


The strict inclusion $\cO_{0,n} \subsetneq \cE^{(\om)}_{0,n}$ holds if and only if $\om(t) =o(t)$ as $t \to \infty$; cf. 
\cite[Corollary 5.17(3)]{RainerSchindl12}.


\def\cprime{$'$}
\providecommand{\bysame}{\leavevmode\hbox to3em{\hrulefill}\thinspace}
\providecommand{\MR}{\relax\ifhmode\unskip\space\fi MR }
\providecommand{\MRhref}[2]{%
  \href{http://www.ams.org/mathscinet-getitem?mr=#1}{#2}
}
\providecommand{\href}[2]{#2}

\end{document}